\DeclareMathAlphabet{\mathcal}{OMS}{cmsy}{m}{n}
\definecolor{ao(english)}{rgb}{0.0, 0.5, 0.0}
\pgfplotsset{compat=newest}
\crefname{figure}{Fig.}{Fig.}
\newtheorem{thm}{Theorem}[section]
\crefname{thm}{Theorem}{Theorems}
\newtheorem{prop}{Proposition}[section]
\crefname{prop}{Proposition}{Propositions}
\newtheorem{lem}{Lemma}[section]
\crefname{lem}{Lemma}{Lemmas}
\newtheorem{cor}{Corollary}[section]
\crefname{cor}{Corollary}{Corollaries}
\newtheorem{rem}{Remark}[section]
\crefname{rem}{Remark}{Remark}
\crefname{ass}{Assumption}{Assumption}
\crefname{conj}{Conjecture}{Conjectures}
\newtheorem{defn}{Definition}[section]
\crefname{defn}{Definition}{Definitions}
\crefname{prob}{Problem}{Problems}
\crefname{algorithm}{Algorithm}{Algorithms}
\crefname{paper}{Paper}{Papers}
\crefname{figure}{Figure}{Figures}
\crefname{section}{Section}{Sections}
\Crefname{section}{Section}{Sections}
\let\mathbb=\mathds
\newcommand{\Rmn}{\mathbb{R}^{n \times m}}
\newcommand{\Rnn}{\mathbb{R}^{n \times n}}
\newcommand{\cone}{\textnormal{cone}}
\newcommand{\conv}{\textnormal{conv}}
\newcommand{\rk}{\textnormal{rank}}
\newcommand{\diag}{\textnormal{diag}}
\newcommand{\cl}{\textnormal{cl}}
\newcommand{\transp}{\mathsf{T}}
\newcommand{\vari}[1]{\text{S}(#1)}
\newcommand{\varict}[1]{\text{S}_{\text{\tiny CT}}(#1)}
\newcommand{\tp}[1]{{#1\text{-positive}}}
\newcommand{\stp}[1]{\text{strictly }#1\text{-positive}}
\newcommand{\Hank}[1]{\mathcal{H}_{#1}}
\newcommand{\Con}[1]{{\mathcal{C}^{#1}}}
\newcommand{\Obs}[1]{{\mathcal{O}^{#1}}}
\newcommand{\Conct}[1]{{\mathcal{C}_{\text{\tiny CT}}^{#1}}}
\newcommand{\Obsct}[1]{{\mathcal{O}_{\text{\tiny CT}}^{#1}}}
\newcommand{\im}[1]{\textnormal{im}({#1})}
\newcommand{\vardim}[1]{${#1}$-variation diminishing}
\newcommand{\vardimord}[1]{order-preserving ${#1}$-variation diminishing}
\newcommand{\ovd}[1]{\text{OVD}_{#1}}
\newcommand{\vd}[1]{\text{VD}_{#1}}
\newcommand{\lint}{\ell_1}
\newcommand{\linf}{\ell_\infty}
\newcommand{\compound}[2]{{#1}_{[#2]}}
\colorlet{FigColor1}{blue}
\colorlet{FigColor2}{red}
\colorlet{FigColor3}{ao(english)}
\colorlet{FigColor4}{orange}
\pgfplotsset{every axis plot/.append style={line width=1.5pt}}
\title{Internally Hankel {$k$}-positive systems \thanks{This work received support by grants from ONR and NSF as well as under the Advanced ERC Grant Agreement Switchlet n.670645 and by DGAPA-UNAM under the grant PAPIIT RA105518. }}
\author{Christian Grussler \thanks{Department of Electrical Engineering and Computer Sciences, UC Berkeley, Berkeley, CA ({christian.grussler@berkeley.edu})}.
\and Thiago B. Burghi\thanks{Department of Engineering, University of Cambridge, Cambridge, UK ({tbb29@cam.ac.uk}).}
\and Somayeh Sojoudi  \thanks{Department of Electrical Engineering and Computer Sciences, UC Berkeley, Berkeley, CA ({sojoudi@berkeley.edu})} }
\begin{document}
\maketitle

	\begin{abstract}	
	    The classes of externally Hankel $k$-positive LTI systems and autonomous $k$-positive systems 
	    have recently been defined, and their properties and applications began to be explored
	    using the framework of total positivity and variation diminishing operators.
	    In this work, these two system classes are subsumed under a new class of internally 
	    Hankel $k$-positive systems, which we define as state-space LTI systems with 
	    $k$-positive controllability and observability operators. We show that internal Hankel 
	    $k$-positivity is a natural extension of the celebrated property of internal positivity ($k=1$), and we derive tractable conditions for verifying  the cases $k > 1$ in the form of internal positivity of the first $k$ 
	    compound systems. As these conditions define a new positive realization problem, 
	    we also discuss geometric conditions for when a minimal internally Hankel $k$-positive realization exists. Finally, we use our results to establish a new framework for bounding 
	    the number of over- and undershoots in the step response of general LTI systems.

\end{abstract}

\section{Introduction}

Externally positive linear time-invariant (LTI) systems 
\begin{equation}\label{eq:SISO_d}
    \begin{aligned}
 	    x(t+1) &= A x(t) + b u(t) \\
        y(t) & = cx(t),
    \end{aligned}
\end{equation}
mapping nonnegative inputs $u(t)$ to nonnegative outputs $y(t)$ have been
recognized as an important system class at least since the exposition by Luenberger \cite{Luenberger1979},
but many of their favourable properties have only recently been exploited \cite{rantzer2015scalable,tanaka2011bounded,Farina2000,son1996robust}.
Particular emphasis has been given to the subclass of internally positive 
systems, that is, externally positive systems such that $x(t)$ remains in 
the nonnegative orthant for nonnegative $u(t)$. As such systems are
characterized by nonnegative system matrices $A$, $b$ and $c$, they can be 
studied with finite-dimensional nonnegative matrix analysis 
\cite{berman1989nonnegative},
an advantage that motivated the search for conditions under 
which an externally positive system admits an internally positive realization
\cite{ohta1984reachability,anderson1996nonnegative,benvenuti2004tutorial}.

At the same time, externally positive systems have been studied for over a century from the viewpoint of fields such as statistics and interpolation theory, leading to the theory of total positivity 
\cite{karlin1968total}. 
Central to this theory is the study of \emph{variation-diminishing} convolution operators
\begin{equation}
    \label{eq:convolution}
    y(t) = \sum_{\tau=-\infty}^{\infty} g(t-\tau) u(\tau),
\end{equation}
with nonnegative kernels $g$, that bound
the \emph{variation} (number of sign changes) of $y(t)$
by the variation of $u(t)$. More generally, a linear mapping $u \mapsto Gu$ is called \emph{$k$-variation diminishing} ($\vd{k}$) if it maps an input $u$ with at most $k$ sign changes to an output
$Gu$ whose number of sign changes do not exceed those of $u$; if the order in
which sign changes occur is preserved whenever $u$ and $Gu$ share the same
number of sign variations, the $\vd{k}$ property is said to be
\emph{order-preserving} ($\ovd{k}$). A core result of total positivity is an
algebraic characterization: an operator is $\ovd{k}$ if and only its matrix
representation is $\tp{k}$, that is, all 
the minors of order up to $k$ in that matrix are nonnegative 
\cite{grussler2020variation,karlin1968total}; \emph{total} positivity refers to 
the case when this is true for all $k$. Under this framework, externally positive 
LTI systems are associated with $\ovd{0}$ Hankel and Toeplitz operators, while for
internally positive systems $x \mapsto Ax$ as well as the controllablity and 
observability operators are $\ovd{0}$.

Despite the link between positive systems and variation-diminishing operators,
it was not until very recently that $\ovd{k-1}$ and $k$-positivity have been studied as 
properties of LTI systems when $k > 1$. 
New results, with applications in non-linear systems analysis and model order reduction, 
have so far focused on two distinct cases: the \textit{external case} 
\cite{grussler2019tractable,grussler2020variation,grussler2020balanced}, where $\ovd{k}$ 
is considered as a property of system convolution operators; and related \textit{autonomous 
cases} \cite{margaliot2018revisiting,alseidi2021discrete}, which concern $k$-positivity of 
the state-space matrix $A$ with $b = 0$. A main result of \cite{grussler2020variation} 
characterizes \emph{(externally) Hankel $k$-positive systems}, i.e., systems with 
$\ovd{k-1}$ Hankel operators, in terms of the external positivity of all so-called 
\emph{$j$-compound systems}, $1 \leq j \leq k$, whose impulse responses are given by 
consecutive $j$-minors of the Hankel operator's matrix representation.

In this paper, we develop a realization theory of Hankel $k$-positivity based on the notion of \emph{internally Hankel 
$k$-positive systems}, which we define as
state-space systems where the controllability and observability operators 
as well as $A$ are $\ovd{k-1}$.
Not only does this theory enable the study of variation-diminishing systems through finite-dimensional 
analysis, but it also establishes an important first bridge between the aforementioned 
autonomous and external notions. Our main result is a finite-dimensional, tractable condition for the verification of 
the $\ovd{k-1}$ property of the controllability and observability operators.
Using this result, internal Hankel $k$-positivity can be completely characterized in terms 
of the existence of a realization that renders all $j$-compound systems internally positive, 
$1 \leq j \leq k$. 
We then use these insights to discuss geometric conditions for the existence of minimal internally Hankel $\tp{k}$ realizations, as previously done for the special case $k=1$ in \cite{ohta1984reachability}. In particular, it is easy to verify then that all relaxation systems \cite{willems1976realization} ($k=\infty$) have 
a minimal internally Hankel totally positive realization. 

As a practical application, we
show how our results can be used to obtain
upper bounds on the number of over- and undershoots in the step response
of an LTI system. This is a classical control problem that lies at the 
heart of the rise-time-settling-time trade-off 
\cite{aastrom2010feedback}, and for which several lower bounds
\cite{damm2014zero,swaroop1996some,elkhoury1993influence,elkhoury1993discrete}, 
but few upper bounds \cite{elkhoury1993influence,elkhoury1993discrete} have
been found. Our approach can be seen as a direct generalization of 
\cite{elkhoury1993influence,elkhoury1993discrete}.  
Non-linear extensions of this problem are of interest
both in control \cite{karlsson2020feedback} and online learning,
in the form of (static) regret \cite{orabona2019modern}; we thus
envision our work as the basis for possible interdisciplinary 
applications. Other possible contributions resulting from 
non-linear extension are discussed in \cite{grussler2020variation}.

The paper is organized as follows. In the preliminaries, we recap total positivity theory and externally Hankel $k$-positive systems. Then, we introduce the concept of internal Hankel $k$-positivity and present our main results on its characterization. Subsequently, extensions to continuous-time and applications to the determination of impulse response zero-crossings are discussed. We conclude with illustrative examples and a summary of open problems.

\section{Preliminaries}
	\label{sec:prelim}

This work lies at the interface between positive control systems and total
positivity theory. Alongside some standard notation, this section briefly introduces key concepts and results from these fields, including recent results on externally $k$-positive LTI systems, which are crucial to the motivation of our main results.

\subsection{Notations}
We write $\mathds{Z}$ for the set of integers and $\mathds{R}$ for the set of
reals, with  $\mathds{Z}_{\ge 0}$ and $\mathds{R}_{\ge 0}$ standing for the respective
subsets of nonnegative elements; the corresponding notation with strict inequality is also used for positive elements. 
The set of real sequences with indices in $\mathbb{Z}$ is denoted by 
$\mathbb{R}^{\mathbb{Z}}$.  
For matrices $X = (x_{ij}) \in \Rmn$, we say that $X$ is
\emph{nonnegative},
$X \geq 0$ 
or $X \in \Rmn_{\geq 0}$
if all elements $x_{ij} \in \mathbb{R}_{\geq 0}$
; again, we use the corresponding notations in case of \emph{positivity}. 
The notations also apply to sequences $x = (x_i) \in \mathbb{R}^{\mathbb{Z}}$.
Submatrices of
$X \in \Rmn$ are denoted by $X_{I,J} := (x_{ij})_{i \in I, j \in J}$, 
where $I {\subseteq} \{1,\dots,n\}$ and 
$J {\subseteq} \{1,\dots,m\}$. If $I$ and $J$
have cardinality $r$, then $\det(X_{I,J})$ is referred to as an
\emph{$r$-minor}, and as a \emph{consecutive $r$-minor} if $I$ and $J$ 
are intervals. 
For $X \in \Rnn$, $\sigma(X) = \{\lambda_1(X),\dots,\lambda_n(X)\}$
denotes the \emph{spectrum} of $X$, where the eigenvalues are ordered
by descending absolute value, i.e., $\lambda_1(X)$ is the eigenvalue
with the largest magnitude, counting multiplicity. In case that the
magnitude of two eigenvalues coincides, we sub-sort them by decreasing
real part. If there exists a permutation matrix $P=[P_1,P_2]$ so that
$P_2^TXP_1=0$, then $X$ is called \emph{reducible} and otherwise
\emph{irreducible}. Further, $X$ is said to be \emph{positive
semidefinite}, $X \succeq 0$, if $X = X^\transp$ and $\sigma(X) 
\subset \mathbb{R}_{\geq 0}$. Further, we use $I_n$ to denote the
identity matrix in $\Rnn$. For $\mathcal{S} \subset \mathds{R}^n$, we denote its \emph{closure}, \emph{convex hull} and $\emph{convex conic hull}$ by $\cl(\mathcal{S})$, $\conv(\mathcal{S})$ and $\cone(\mathcal{S})$, respectively. $\mathcal{S}$ is a \emph{polyhedral cone}, if there exists $k \in \mathds{Z}_{>0}$ and $P \in \mathds{R}^{n \times k}$ such that $\mathcal{S} = \{Px: x \in \mathds{R}^{k}_{\geq 0} \} =: \cone(P)$. For $A \in \Rnn$, $\mathcal{S}$ is said to be \emph{$A$-invariant}, $A \mathcal{S} \subset \mathcal{S}$, if $Ax \in \mathcal{S}$ for all $x \in \mathcal{S}$.
For a subset $\mathcal{S} \subset \mathds{Z}$, we write $g \geq 0$ or $g \in \mathds{R}^{\mathcal{S}}_{\geq 0}$ if  $g: 
	 \mathcal{S} \to \mathbb{R}_{\geq0}$ is a \emph{nonnegative function (sequence)} and
	\begin{align*}
	    \mathds{1}_{\mathcal{S}}(t) := 
	        \begin{cases}
	            1 & t \in \mathcal{S}\\
	            0 & t \notin \mathcal{S}
	        \end{cases}
	\end{align*}
	for the \emph{(1-0) indicator function}. In particular, we denote the \emph{Heaviside function} by $s(t) := \mathds{1}_{\mathbb{R}_{\geq 0}}(t)$ and the \emph{unit pulse function} by $\delta(t)$. 
The set of all \emph{absolutely summable sequences} is denoted by $\lint$ and the set of \emph{bounded sequences} by $\linf$.

\subsection{Linear discrete-time systems}
We consider \emph{discrete-time, linear time-invariant (LTI) systems} with input
$u$ and output $y$. The output $g(t) = y(t)$ corresponding to
$u(t) = \delta(t)$ is called the \emph{impulse response}. Throughout this
work, we assume that  $g \in \lint$ and $u \in \ell_\infty$. The \emph{transfer function} of the system is given by
\begin{equation}
G(z) = \sum_{t=0}^\infty g(t)z^{-t} = \frac{r \prod_{i=1}^{m} (z-z_i)}{\prod_{j=1}^{n}(z-p_i)}
\end{equation}
where $r \in \mathbb{R}$, and $p_i$ and $z_i$ are referred to as \emph{poles}
and \emph{zeros}, both of which are sorted in same way as the eigenvalues of
a matrix. Without loss of generality, we assume that $g(0) = 0$ ($m < n$).
The tuple $(A,b,c)$ is referred to as a \emph{state-space realization} of 
$G(z)$ if \cref{eq:SISO_d} holds, 
with stable $A \in \mathbb{R}^{n\times n}$, and $b, c^\transp \in \mathbb{R}^n$.
It holds then that 
\begin{equation*} 
    g(t) = c A^{t-1} b \, s(t-1).
\end{equation*}
We assume that the set of poles and the set of zeros of a transfer function are
disjoint, and define the \emph{order of a system} as the number of poles of
$G(z)$. A realization $(A,b,c)$ is called \emph{minimal} if the eigenvalues of
$A$ are precisely the poles of $G(z)$.  
For $t \ge 0$, the \emph{Hankel operator} 
	\begin{align}
	 \label{eq:def_hank_disc}
	(\Hank{g} u)(t) &:= \sum_{\tau=-\infty}^{-1} g(t-\tau)u(\tau) = \sum_{\tau=1}^{\infty} g(t+\tau)u(-\tau) %
	\end{align}
describes the evolution of $y$ after $u$ has been turned off at $t = 0$, i.e., 
$u(t) = u(t)(1-s(t))$. It obeys the factorization 
\begin{equation}
    \Hank{g} u = \mathcal{O}(A,c)(\mathcal{C}(A,b) u) \label{eq:Hankel_fac}
\end{equation}
with the \emph{controllability and observability operators} given by
\begin{subequations}
    \label{eq:statespace_operators}
	\begin{align}
	x(0) = \Con{}(A,b) u &:= \sum_{t=-\infty}^{-1} A^{-t-1} b u(t), \ u \in \linf  \\
	(\Obs{}(A,c) x_0)(t) &:= cA^t x_0, \ x_0 \in \mathds{R}^n,  \ t \in \mathds{Z}_{\geq 0}
	\end{align}
\end{subequations}
Finally, for $t,j \in \mathds{Z}_{>0}$, we will often make use of the \emph{Hankel matrix}   
\begin{subequations} 
    \begin{align}
	H_g(t,j) &:= \begin{pmatrix}
	g(t) & g(t+1) & \dots & g(t+j-1)\\
	g(t+1) & g(t+2) & \dots & g(t+j)\\
	\vdots & \vdots & \ddots   & \vdots \\
	g(t+j-1)   & g(t+j) & \dots & g(t-2(j-1))\\
	\end{pmatrix} = \Obs{j}(A,c) A^{t-1} \Con{j}(A,b) \label{eq:hankel_mat_decomp}
	\end{align}
	where 
	\begin{align}
	\label{eq:truncated_con}
	\Con{j}(A,b) &:= \begin{pmatrix}
	b & Ab & \dots & A^{j-1}b
\end{pmatrix}	\\
	\label{eq:truncated_obs}
	\Obs{j}(A,c) &:= \Con{j}(A^\transp,c^\transp)^\transp.
	\end{align}
\end{subequations}

\subsection{Total positivity and the variation diminishing property}
\label{sec:vardim}

A central idea in this work is that positivity is an instance of
the variation diminishing property. The \emph{variation} of a sequence or vector $u$ is defined as
the number of sign-changes in $u$, i.e., 
\begin{equation*}
\vari{u} := \sum_{i \geq 1} \mathbb{1}_{\mathbb{R}_{< 0}}(\tilde{u}_i \tilde{u}_{i+1}), 
    \quad 
    \vari{0} := 0
\end{equation*}
where $\tilde{u}_i$ is the vector resulting form deleting all zeros in $u$.
\begin{defn}\label{def:ovd_k}
A linear map $u \mapsto X u$ is said to be \emph{\vardimord{k}} ($\ovd{k}$), 
$k \in \mathds{Z}_{\ge0}$, if for all $u$ with $\vari{u} \leq k$ it holds that
\begin{enumerate}[i.]
    \item $\vari{Xu} \leq \vari{u}$.
    \item The sign of the first non-zero elements in $u$ and $Xu$ coincide whenever $\vari{u} = \vari{Xu}$.
\end{enumerate}
If the second item is dropped, then $u \mapsto Xu$ is called \emph{\vardim{k}} ($\vd{k}$). For brevity, we simply say $X$ is $\text{(O)}\vd{k}$.
\end{defn} 

The $\ovd{k}$ property extends the the cone-invariance of nonnegative matrices, 
namely $X \in \Rmn_{\geq 0}$ is $\ovd{0}$, because $X \mathds{R}_{\geq 0}^m 
\subseteq  \mathds{R}_{\geq 0}^n$. 
For generic $k$, \emph{total positivity theory} provides algebraic conditions for the $\ovd{k}$ property by means of compound matrices. To define these, let the $i$-th
elements of the $r$-tuples in
\begin{equation*}
\mathcal{I}_{n,r} := \{ v = \{v_1,\dots,v_r\} \subset \mathds{N}: 1\leq v_1 < v_2 < \dots < v_r \leq n \}
\end{equation*} 
be defined by \emph{lexicographic ordering}. Then, 
the $(i,j)$-th entry of the \emph{r-th multiplicative compound matrix} 
$\compound{X}{r} \in \mathbb{R}^{\binom{n}{r} \times \binom{m}{r}}$ of 
$X \in \Rmn$ is defined by $\det(X_{(I,J)})$, where $I$ is the $i$-th and $J$ is
the $j$-th element in $\mathcal{I}_{n,r}$ and $\mathcal{I}_{m,r}$, respectively. 
For example, if $X \in \mathbb{R}^{3 \times 3}$, then $\compound{X}{2}$ reads
\begin{align*}
\begin{pmatrix}
\det(X_{\{1,2 \},\{1,2 \}}) & \det(X_{\{1,2 \},\{1,3\}}) & \det(X_{\{1,2 \},\{2,3\}})\\
\det(X_{\{1,3 \},\{1,2 \}}) & \det(X_{\{1,3 \},\{1,3\}}) & \det(X_{\{1,3 \},\{2,3\}})\\
\det(X_{\{2,3 \},\{1,2 \}}) & \det(X_{\{2,3 \},\{1,3\}}) & \det(X_{\{2,3 \},\{2,3\}})\\
\end{pmatrix}.
\end{align*}
Notice a nonnegative matrix verifies $X_{[1]} = X \ge 0$, which is equivalent to $X$ being $\ovd{0}$. This can be generalized through the compound matrix as follows \cite{grussler2020variation,karlin1968total}.
\begin{defn}
    \label{def:k_pos_matrix}
	Let $X \in \Rmn$ and $k \leq \min\{m,n\}$. $X$ is called \emph{\tp{k}} if
	$\compound{X}{j} \ge 0$ for $1 \leq j \leq k$,  
	and \emph{\stp{k}} if $\compound{X}{j} > 0$ for 
	$1 \leq j \leq k$.
	In case $k = \min\{m,n\}$, $X$ is called \emph{(strictly) totally 
	positive}.
\end{defn}
\begin{prop} \label{prop:k_pos_mat_var}
	Let $X \in \Rmn$ with $n \geq m$. Then, $X$ is $\tp{k}$ with $1 \leq k \leq m$
	if and only if $X$ is $\ovd{k-1}$.
\end{prop}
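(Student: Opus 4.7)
The plan is to reduce both implications to a core sub-claim on the sign structure of $Y\xi$ for a nonnegative-minor matrix $Y$ paired with an alternating sign vector $\xi$, using the Cauchy--Binet formula on one side and a cofactor-expansion contradiction on the other.

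For the sufficiency direction ($\tp{k}\Rightarrow\ovd{k-1}$), I would first observe that any $u\in\mathbb{R}^m$ with $\vari{u}=j-1\le k-1$ decomposes as $u=V\xi$, where $V\in\mathbb{R}^{m\times j}$ collects the $j$ maximal constant-sign blocks of $u$ (made nonnegative) as its columns, arranged in row order with disjoint, interval-wise supports, and $\xi\in\{\pm 1\}^{j}$ is the alternating sign pattern. A direct row-permutation argument shows that every nonzero minor of $V$ is a positive product of its entries, so $V$ is totally positive. By Cauchy--Binet, every $r$-minor of $Y:=XV$ is a nonnegative combination of an $r$-minor of $X$ (nonneg by $\tp{k}$ as $r\le j\le k$) and an $r$-minor of $V$ (nonneg), so $\compound{Y}{r}\ge 0$ for all $r\le j$. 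The task now reduces to the sub-claim: if $Y\in\mathbb{R}^{n\times j}$ has $\compound{Y}{r}\ge 0$ for every $r\le j$, then $Y\xi$ has at most $j-1$ sign changes, with matching sign on its first nonzero entry.

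To prove this sub-claim, I would first handle the strict case where every such $r$-minor of $Y$ is strictly positive. Assume for contradiction that $\vari{Y\xi}\ge j$, and pick row indices $I=\{i_1<\cdots<i_{j+1}\}$ along which $Y\xi$ alternates strictly in sign. The $(j+1)\times(j+1)$ matrix formed by appending $(Y\xi)_I$ as an extra column to $Y_{I,*}$ has rank at most $j$, so its determinant vanishes; expanding along the appended column gives
$$
\sum_{\ell=1}^{j+1}(-1)^{\ell+j+1}(Y\xi)_{i_\ell}\,M_\ell=0,
$$
where each $M_\ell$ is a $j$-minor of $Y$. Substituting $(Y\xi)_{i_\ell}=(-1)^{\ell-1}|(Y\xi)_{i_\ell}|$ collapses every summand to a strictly positive quantity, giving a contradiction. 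For a general $\tp{k}$ (nonstrict) $X$, I would perturb by $\epsilon E$ with $E$ strictly totally positive (for instance a Vandermonde kernel), apply the strict sub-claim to $X+\epsilon E$, and pass to the limit using the fact that $\vari{\cdot}$ is lower semicontinuous under coordinatewise convergence. The matching-sign condition follows by applying the same bound to a suitable truncation of $u$.

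For the necessity direction ($\ovd{k-1}\Rightarrow\tp{k}$), the $r=1$ case gives $X\ge 0$ by running the standard basis through $X$ and invoking $\ovd{0}$. For general $r\le k$ and index sets $I,J$ of cardinality $r$, I would choose a test input $u\in\mathbb{R}^m$ supported on $J$ whose entries on $J$ are the alternating cofactor weights associated with the expansion of $\det X_{I,J}$; then $\vari{u}\le r-1\le k-1$, and the $\ovd{k-1}$ constraint on $(Xu)_I$ combined with a Laplace expansion extracts the nonnegativity of $\det X_{I,J}$. The main obstacle throughout is the nonstrict step of the sub-claim: once some $j$-minors of $Y$ vanish, the cofactor sum cannot be forced to be strictly positive and the bare contradiction collapses. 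The cleanest remedy is the perturbation-and-limit argument above; the less attractive alternative is to factor the rank-deficient $Y_{I,*}$ and induct on $j$, which requires tracking how $\tp{}$ structure descends through the factorization.
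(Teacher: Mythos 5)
The paper itself does not prove this proposition (it is imported from Karlin's book and from \cite{grussler2020variation}), so your argument has to stand on its own; its skeleton --- factor $u=V\xi$ with $V$ totally positive built from the sign blocks of $u$, use Cauchy--Binet, and run the cofactor-expansion contradiction on a $(j+1)$-row alternating selection --- is indeed the classical route, and your strict case is correct. The genuine gap is the reduction of the nonstrict case to the strict one: additive perturbation $X+\epsilon E$ with $E$ strictly totally positive does \emph{not} preserve $k$-positivity, because total positivity is not closed under addition. For instance, $X=\left(\begin{smallmatrix}1 & 10\\ 0 & 0\end{smallmatrix}\right)$ is $\tp{2}$ and $E=\left(\begin{smallmatrix}2 & 1\\ 1 & 1\end{smallmatrix}\right)$ is strictly totally positive, yet $\det(X+\epsilon E)=\epsilon(\epsilon-9)<0$ for small $\epsilon>0$; taking $u=\xi$ (so $V=I$) shows the failure occurs exactly where you need the strict sub-claim. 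The standard repairs are multiplicative, not additive: either smooth on the left with the Gaussian kernel $F(\sigma)$ of \cref{prop:stp_aux_matrix}, which by Cauchy--Binet keeps all minors of $F(\sigma)XV$ nonnegative and makes the relevant $j$-minors strictly positive whenever $\rk(XV)\ge j$ (the rank-deficient case, i.e.\ when \emph{all} $j$-minors in your expansion vanish, is precisely what still needs a separate reduction), or invoke the Whitney-type density theorem that every totally positive $n\times j$ matrix is a limit of strictly totally positive ones and apply it directly to $Y=XV$. With either of these, your lower-semicontinuity limit argument is fine.

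Two further steps are too thin. First, the order-preserving clause is not obtained by ``a suitable truncation of $u$''; in the strict case one fixes a maximal alternating row selection $I$ of $Y\xi$ of length $j$ beginning at its first nonzero entry, writes $\xi=(Y_{I,\ast})^{-1}(Y\xi)_I$, and reads off $\sign(\xi_1)$ from Cramer's rule, the numerator being a nonnegative combination of $(j-1)$-minors. Second, in the necessity direction your test vector (Laplace cofactors of $\det X_{I,J}$ placed on $J$) only yields $(Xu)_I=\det(X_{I,J})\,e_1$, and since the entries of $Xu$ outside $I$ are uncontrolled, a negative minor violates neither the variation bound nor the sign-matching clause --- no contradiction follows. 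The argument that works assumes $\det X_{I,J}\neq 0$, takes $u$ supported on $J$ with $u_J=(X_{I,J})^{-1}(1,-1,1,\dots)^\transp$, so that $\vari{u}\le r-1$ while $(Xu)_I$ alternates; $\ovd{k-1}$ then forces $\vari{Xu}=\vari{u}=r-1$, strict alternation of $u_J$, and (via sign matching, since any nonzero entry of $Xu$ above row $i_1$ must be positive) $u_{j_1}>0$, after which Cramer's rule together with the inductive hypothesis that all $(r-1)$-minors are nonnegative gives $\det X_{I,J}>0$. This is where clause (ii) of \cref{def:ovd_k} genuinely enters --- without it the claim is false, as $-I$ shows.
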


The Cauchy-Binet formula implies the following important properties \cite{fiedler2008special}.
\begin{lem}\label{lem:compound_mat}
	Let $X \in \mathbb{R}^{n \times p}$ and $Y \in \mathbb{R}^{p \times m}$.
	\begin{enumerate}[i)]
		\item $\compound{(XY)}{r} = \compound{X}{r}\compound{Y}{r}$.
		\item $\sigma(\compound{X}{r}) = \{\prod_{i \in I} \lambda_i(X): I \in \mathcal{I}_{n,r} \}$.
		\item $\compound{X^\transp}{r} = (\compound{X}{r})^\transp$.
	\end{enumerate} 
\end{lem}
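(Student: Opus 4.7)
The plan is to prove the three items in the order (i), (iii), (ii), treating Cauchy--Binet as the workhorse and deducing (iii) essentially from the definition before chaining (i) and (iii) to obtain (ii) via a triangularization argument.

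For (i), I would fix any $I\in\mathcal{I}_{n,r}$ and $J\in\mathcal{I}_{m,r}$ and simply apply the Cauchy--Binet formula:
\begin{equation*}
\det\bigl((XY)_{I,J}\bigr) = \sum_{K \in \mathcal{I}_{p,r}} \det(X_{I,K})\,\det(Y_{K,J}).
\end{equation*}
Reading the left-hand side as the $(I,J)$-entry of $\compound{(XY)}{r}$ and the right-hand side as the standard row-by-column product of $\compound{X}{r}$ and $\compound{Y}{r}$ under lexicographic indexing yields the claim. For (iii), I would use $(X^\transp)_{I,J} = (X_{J,I})^\transp$ together with $\det(M) = \det(M^\transp)$, so that the $(I,J)$-entry of $\compound{X^\transp}{r}$ equals $\det(X_{J,I})$, i.e., the $(J,I)$-entry of $\compound{X}{r}$, which is the $(I,J)$-entry of $(\compound{X}{r})^\transp$.

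For (ii), I would pass to a complex Schur decomposition $X = Q T Q^{-1}$ with $T$ upper triangular and diagonal entries $\lambda_1(X),\dots,\lambda_n(X)$. Applying (i) twice, together with the identity $\compound{I_n}{r} = I_{\binom{n}{r}}$ (which gives $\compound{Q^{-1}}{r}\compound{Q}{r} = \compound{I_n}{r} = I_{\binom{n}{r}}$, hence $\compound{Q^{-1}}{r} = (\compound{Q}{r})^{-1}$), I get
\begin{equation*}
\compound{X}{r} \;=\; \compound{Q}{r}\,\compound{T}{r}\,(\compound{Q}{r})^{-1},
\end{equation*}
so $\compound{X}{r}$ is similar to $\compound{T}{r}$. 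The remaining task is to identify $\spec(\compound{T}{r})$. I would verify that $\compound{T}{r}$ is upper triangular in the lexicographic ordering of $\mathcal{I}_{n,r}$: for $I=\{i_1<\dots<i_r\}$ and $J=\{j_1<\dots<j_r\}$ with $I >_{\mathrm{lex}} J$, let $k$ be the first index where $i_k > j_k$; then in $T_{I,J}$ the entire lower-left $(r-k+1)\times k$ block has rows $i_k,\dots,i_r$ and columns $j_1,\dots,j_k$ with $i_\ell > j_k \geq j_s$ for all $\ell\geq k$ and $s\leq k$, so upper-triangularity of $T$ forces this block to vanish, and consequently $\det(T_{I,J}) = 0$. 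For $I=J$, the matrix $T_{I,I}$ is itself upper triangular with diagonal $(t_{ii})_{i\in I}$, giving the diagonal entry $\prod_{i\in I}\lambda_i(X)$. The spectrum of $\compound{T}{r}$, hence of $\compound{X}{r}$, is therefore $\{\prod_{i\in I}\lambda_i(X): I\in\mathcal{I}_{n,r}\}$.

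The main obstacle is the upper-triangularity check in (ii); the index bookkeeping between lexicographic order on $\mathcal{I}_{n,r}$ and the zero pattern of $T_{I,J}$ is the only non-routine bit, but it reduces to the short argument above. Everything else is a clean application of Cauchy--Binet and the elementary identities $\det(M)=\det(M^\transp)$ and $\compound{I_n}{r} = I_{\binom{n}{r}}$.
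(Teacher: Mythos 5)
Your proof is correct. The paper does not prove this lemma at all---it simply cites \cite{fiedler2008special}---and your argument is the standard one found there: Cauchy--Binet gives (i), the submatrix-transpose identity gives (iii), and (ii) follows by triangularization, where your verification that $\compound{T}{r}$ inherits upper-triangularity (via the $(r-k+1)\times k$ zero block, whose dimensions sum to $r+1$ and hence kill the minor) and that its diagonal entries are $\prod_{i\in I}\lambda_i(X)$ is exactly the usual route to Kronecker's theorem on the spectrum of compound matrices.
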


In conjunction with the
Perron-Frobenius theorem \cite{perron1907theorie,frobenius1912matrizen}, this yields a spectral characterization of $\tp{k}$ matrices as follows. 
\begin{cor}\label{cor:EW_compound}
	Let $X \in \Rnn$ be \tp{k} such that $\compound{X}{j}$ is irreducible for $1\leq j \leq k$. Then,
	\begin{enumerate}[i.]
		\item $\lambda_1(X) > \dots >\lambda_k(X) > 0$.
		\item $\lambda_1(\compound{X}{j}) = \prod_{i=1}^j \lambda_i(X) > 0$.
		\item $\compound{\begin{pmatrix}
		\xi_1 & \dots & \xi_j
		\end{pmatrix}}{j} \in 
		\mathds{R}^{\binom{n}{j}}_{>0}$, $1\leq j\leq k$,  where $\xi_i$ is the eigenvector associated with $\lambda_i(X)$ for $1 \leq i\leq k$. 
	\end{enumerate} 
\end{cor}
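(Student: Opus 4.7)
The plan is to derive the three items simultaneously by applying the Perron--Frobenius theorem (PF) to each irreducible nonnegative matrix $\compound{X}{j}$, $1 \leq j \leq k$, and then translating the conclusions back into statements about $X$ itself via \cref{lem:compound_mat}.

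For items (i) and (ii), I would proceed by induction on $j$, establishing simultaneously that $\lambda_1(X) > \cdots > \lambda_j(X) > 0$ (real, positive, and strictly decreasing) and $\lambda_1(\compound{X}{j}) = \prod_{i=1}^{j}\lambda_i(X)$. The base case $j=1$ is PF applied directly to $X = \compound{X}{1}$. For the inductive step, PF on $\compound{X}{j+1}$ provides a simple, strictly positive Perron eigenvalue $\rho_{j+1} = \lambda_1(\compound{X}{j+1})$, which by \cref{lem:compound_mat}(ii) coincides with some product $\prod_{i\in I^*}\lambda_i(X)$ for an $I^*\in\mathcal{I}_{n,j+1}$ whose modulus attains the maximum value $\prod_{r=1}^{j+1}|\lambda_r(X)|$. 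A case analysis using the inductive hypothesis, the positivity and realness of $\rho_{j+1}$, and the tie-breaking rule in our eigenvalue ordering identifies $I^* = \{1,\ldots,j+1\}$ and thereby forces $\lambda_{j+1}(X)$ to be real positive. The strict gap $\lambda_j(X) > \lambda_{j+1}(X)$ then follows because equality would cause the Perron eigenvalue of $\compound{X}{j}$ to be hit by two distinct index sets $\{1,\ldots,j\}$ and $\{1,\ldots,j-1,j+1\}$ in $\mathcal{I}_{n,j}$, contradicting its simplicity obtained at the previous inductive step.

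For (iii), applying \cref{lem:compound_mat}(i) to the identity $X[\xi_1,\ldots,\xi_j] = [\xi_1,\ldots,\xi_j]\,\diag(\lambda_1(X),\ldots,\lambda_j(X))$ and using that $\compound{\diag(\lambda_1(X),\ldots,\lambda_j(X))}{j}$ is the scalar $\prod_{i=1}^{j}\lambda_i(X)$ yields
\begin{equation*}
    \compound{X}{j}\,\compound{[\xi_1,\ldots,\xi_j]}{j} = \Bigl(\prod_{i=1}^{j}\lambda_i(X)\Bigr)\compound{[\xi_1,\ldots,\xi_j]}{j}.
\end{equation*}
Hence $\compound{[\xi_1,\ldots,\xi_j]}{j}$ is an eigenvector of $\compound{X}{j}$ associated with the simple Perron eigenvalue $\lambda_1(\compound{X}{j})$ identified in (ii); by PF it must be a nonzero real scalar multiple of the strictly positive Perron eigenvector, so all of its entries are nonzero and share a common sign, and choosing the normalisation of the $\xi_i$ accordingly yields strict positivity.

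The main technical obstacle will be the induction step, because PF alone does not exclude non-Perron eigenvalues of $\compound{X}{j+1}$ from lying on the spectral circle: if $X$ had a complex-conjugate pair of eigenvalues of modulus $\lambda_j(X)$ just past the $j$-th position, that pair's product could realise $\rho_{j+1}$ as a positive real number via some $I^* \neq \{1,\ldots,j+1\}$, in conflict with (ii). Ruling out such configurations requires carefully combining the $\tp{k}$ assumption with the irreducibility of every $\compound{X}{j}$ to enforce enough primitivity-like structure on $\compound{X}{j+1}$; this delicate interplay is where most of the care in the proof will need to be concentrated.
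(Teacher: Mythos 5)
Your route --- Perron--Frobenius applied to each irreducible nonnegative $\compound{X}{j}$, translated back to $X$ via the spectral mapping in \cref{lem:compound_mat} --- is exactly the route the paper intends (the corollary is stated there with no further detail, as a direct consequence of these two ingredients). But as a proof your proposal is incomplete, and the hole is precisely the step you flag yourself: the identification $I^\ast=\{1,\dots,j+1\}$ does not follow from existence, positivity and simplicity of the Perron root of $\compound{X}{j+1}$. Concretely, at the step $j=2\to 3$ suppose $\lambda_3(X)=\overline{\lambda_4(X)}$ is a complex pair with $|\lambda_3|=|\lambda_4|=\lambda_2(X)$. Then $\rho(\compound{X}{3})=\lambda_1\lambda_2^2$ is attained exactly by the index set $\{1,3,4\}$; it is real, positive, and can be simple, while the competing set $\{1,2,3\}$ yields a complex eigenvalue of the same modulus, so neither the tie-breaking rule nor simplicity produces a contradiction --- yet $\lambda_3(X)$ is not real. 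Ruling this out requires the spectral gaps $\lambda_j(X)>|\lambda_{j+1}(X)|$ at the previous levels, i.e.\ primitivity-type information about the lower compounds, and your proposal does not supply it; it only records that it is needed.

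For the first level the missing ingredient is cheap and worth adding: if $X$ is irreducible and all $2$-minors are nonnegative, then $x_{11}>0$ (if $x_{11}=0$, the minor on rows $\{1,p\}$ and columns $\{1,q\}$ forces $x_{1q}x_{p1}=0$ for all $p,q\ge 2$, so row $1$ or column $1$ vanishes, contradicting irreducibility), hence $X$ is primitive and $\lambda_1>|\lambda_2|$, which closes your step $j=1\to 2$. For the higher levels one must similarly establish primitivity of (or a spectral gap for) $\compound{X}{j}$, e.g.\ by showing suitable principal $j$-minors of $X$ are positive; this Gantmacher--Krein-type argument is the real content behind the corollary, which the paper delegates to the cited total-positivity literature, and it is the idea absent from your plan. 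Your treatment of item (iii) is fine once (i)--(ii) are secured, provided you also note that $\xi_1,\dots,\xi_j$ are linearly independent (so that $\compound{\begin{pmatrix}\xi_1 & \dots & \xi_j\end{pmatrix}}{j}\neq 0$), which the distinctness of $\lambda_1,\dots,\lambda_j$ guarantees.
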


The next result shows that it often suffices to check consecutive minors to verify $k$-positivity vis-a-vis a combinatorial number of minors \cite{karlin1968total,fallat2017total}.
\begin{prop}
    \label{prop:consecutive}
	Let $X \in \Rmn$, $k \leq \min \{n,m\}$ be such that
	\begin{enumerate}[i.]
	    \item all consecutive $r$-minors of $X$ are 
	    positive, $1 \leq r \leq k-1$,
	    \item all consecutive $k$-minors of $X$ are
	    nonnegative (positive).
	\end{enumerate}
	Then, $X$ is (strictly) \tp{k}.
\end{prop}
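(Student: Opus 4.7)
The plan is to derive the proposition from Fekete's classical lemma in total positivity theory, which in the present notation reads: if all consecutive $(r-1)$-minors of $X$ are positive and all consecutive $r$-minors are nonnegative (positive), then all $r$-minors are nonnegative (positive). Once Fekete's lemma is available, I would invoke it iteratively for $r = 1, 2, \ldots, k$ to establish $\compound{X}{r} \geq 0$ (resp.\ $\compound{X}{r} > 0$) for every such $r$, which by \Cref{def:k_pos_matrix} is exactly the defining condition of $k$-positivity (resp.\ strict $k$-positivity).

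For the base case $r = 1$, if $k = 1$ then hypothesis (ii) directly gives $X \geq 0$ (resp.\ $X > 0$); if $k > 1$, hypothesis (i) at $r = 1$ already gives $X > 0$. For $2 \leq r \leq k-1$, the induction hypothesis provides positivity of every $(r-1)$-minor, while hypothesis (i) gives positivity of every consecutive $r$-minor; Fekete's lemma then upgrades the latter to positivity of every $r$-minor. For the final step $r = k$, the previous step supplies positivity of all $(k-1)$-minors and hypothesis (ii) supplies nonnegativity (positivity) of all consecutive $k$-minors, so one last application of Fekete's lemma yields the desired sign of every $k$-minor.

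The core technical ingredient is therefore Fekete's lemma itself. The standard argument is by induction on a \emph{spread} measure such as $\mu(I,J) := (\max I - \min I) + (\max J - \min J) - 2(r-1)$, which vanishes precisely when both $I$ and $J$ are intervals. For a non-consecutive index pair, the Desnanot--Jacobi (Lewis Carroll) identity expresses $\det(X_{I,J})$ as a rational expression whose denominator is a consecutive $(r-1)$-minor, positive by the hypothesis on $(r-1)$-minors, and whose numerator is a signed combination of products of $r$- and $(r-1)$-minors of strictly smaller spread; inducting on $\mu$ fixes the sign of $\det(X_{I,J})$. Alternatively, the lemma can be invoked directly from \cite{karlin1968total,fallat2017total}.

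The main obstacle lies not in the outer induction on $r$, which is mechanical once Fekete's lemma is granted, but in the bookkeeping inside the proof of Fekete's lemma: one must choose the deleted indices in the Desnanot--Jacobi identity so that (a) the denominator is actually a consecutive $(r-1)$-minor and (b) every numerator minor has strictly smaller spread than $\mu(I,J)$. This is classical but tedious, so in a paper of this kind the natural route is to cite \cite{karlin1968total,fallat2017total} rather than reproduce the argument.
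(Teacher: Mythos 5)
The paper does not actually prove \cref{prop:consecutive} --- it is stated with a citation to \cite{karlin1968total,fallat2017total} --- and your proposal, which reduces the claim to the classical Fekete-type criterion and ultimately defers to those same references, is essentially the same approach and is sound. One small caveat: the two-level form of Fekete's lemma you quote (positivity of consecutive $(r-1)$-minors alone, with no lower-order hypotheses) is stronger than the classical statement, but this is harmless here because your outer induction in fact supplies positivity of \emph{all} minors of every order below $r$, which is exactly the hypothesis under which the cited results (and the Desnanot--Jacobi/spread induction you sketch) apply.
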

Finally, to be able to apply \cref{prop:consecutive} to matrices lacking
strictly positive intermediate $j$-minors, we will make use of the
following.
\begin{prop}\label{prop:stp_aux_matrix}
Let $F(\sigma) \in \mathbb{R}^{n\times n}$ be given by 
$F(\sigma)_{ij}=e^{-\sigma(i-j)^2}$, with $\sigma \ge 0 $,
and let $X\in\mathbb{R}^{n\times m}$ with $m \le n$. Then for $r \le m$,
the following hold:
\begin{enumerate}[i.]
    \item $F(\sigma)$ is strictly totally positive.
    \item $F(\sigma)\to I$ as $\sigma \to \infty$, and 
    $F(\sigma)X \to X$ as $\sigma \to \infty$.
    \item if $\compound{X}{r}\ge 0$, and if $\rk\,X = m$, then
    $\compound{(F(\sigma)X)}{r}>0$ for all $\sigma > 0$.
    \item if $\compound{(F(\sigma)X)}{r}\geq 0$ for all $\sigma>0$, then $\compound{X}{r}\ge 0$.
\end{enumerate}
\end{prop}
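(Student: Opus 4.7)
The plan is to prove the four claims in the order stated; the backbone of the argument is a multiplicative factorization of the Gaussian kernel together with the Cauchy--Binet identity of \cref{lem:compound_mat}.

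For (i), I would use the factorization $e^{-\sigma(i-j)^2} = e^{-\sigma i^2}\cdot e^{2\sigma ij}\cdot e^{-\sigma j^2}$ to write $F(\sigma) = D_\sigma M_\sigma D_\sigma$, where $D_\sigma := \diag(e^{-\sigma i^2})_{i=1}^n$ is a positive diagonal matrix and $M_\sigma$ has entries $(M_\sigma)_{ij} = x_i^{\,j}$ with $x_i := e^{2\sigma i}$. Since $0 < x_1 < x_2 < \cdots < x_n$, factoring out $\diag(x_i)$ from $M_\sigma$ leaves a classical Vandermonde matrix in the distinct positive nodes $x_i$, whose minors admit the standard closed-form product expression and are therefore all strictly positive (a standard fact from total positivity theory, cf.\ \cite{karlin1968total}). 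By \cref{lem:compound_mat}, $\compound{F(\sigma)}{r} = \compound{D_\sigma}{r}\compound{M_\sigma}{r}\compound{D_\sigma}{r}$ is a product of two positive diagonal matrices with a strictly positive matrix, hence $\compound{F(\sigma)}{r} > 0$ for every $r \leq n$, giving (i).

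Item (ii) is immediate by inspection: $F(\sigma)_{ii} = 1$ for all $\sigma$ and $F(\sigma)_{ij} = e^{-\sigma(i-j)^2} \to 0$ as $\sigma \to \infty$ whenever $i \neq j$, while matrix multiplication is continuous. Item (iv) then follows from (ii), because the entries of $\compound{(F(\sigma)X)}{r}$ are polynomials (minors) in the entries of $F(\sigma)X$, and therefore $\compound{(F(\sigma)X)}{r} \to \compound{X}{r}$ as $\sigma \to \infty$; since the nonnegative orthant is closed, $\compound{X}{r} \geq 0$.

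For (iii), Cauchy--Binet (\cref{lem:compound_mat} again) gives $\compound{(F(\sigma)X)}{r} = \compound{F(\sigma)}{r}\compound{X}{r}$, with $\compound{F(\sigma)}{r}$ strictly positive by (i). Because $\rk X = m \geq r$ and $X$ has exactly $m$ columns, every $r$-element subset of its columns is linearly independent, so every column of $\compound{X}{r}$ contains at least one nonzero entry, which by the hypothesis $\compound{X}{r}\ge 0$ must be strictly positive. Multiplying a strictly positive matrix by a nonnegative matrix whose every column carries at least one positive entry yields a strictly positive matrix, which proves (iii). The only step requiring any thought is the strict positivity of the generalized Vandermonde minors underlying (i); the remaining claims then reduce to Cauchy--Binet bookkeeping and elementary continuity arguments.
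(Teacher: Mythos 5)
Your proof is correct, but it takes a more self-contained route than the paper, which disposes of parts (i)--(iii) by citing \cite[p.~220]{karlin1968total} and only argues part (iv) directly (via (ii) and continuity of minors, exactly as you do). Your treatment of (i) unpacks the standard argument behind Karlin's result: the factorization $e^{-\sigma(i-j)^2}=e^{-\sigma i^2}e^{2\sigma ij}e^{-\sigma j^2}$ writes $F(\sigma)=D_\sigma M_\sigma D_\sigma$ with $D_\sigma$ positive diagonal and $M_\sigma$ a generalized Vandermonde matrix in the distinct nodes $x_i=e^{2\sigma i}$, and Cauchy--Binet (\cref{lem:compound_mat}) then reduces strict total positivity of $F(\sigma)$ to strict positivity of generalized Vandermonde minors --- a classical fact (cf.\ also \cite[Example~0.1.4]{fallat2011}, which the paper itself invokes elsewhere), provable e.g.\ via Schur polynomials or Descartes' rule of signs, so your citation there is legitimate rather than circular. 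Your proof of (iii) --- Cauchy--Binet plus the observation that $\rk X=m$ forces every column of $\compound{X}{r}$ to contain a strictly positive entry, so that a strictly positive $\compound{F(\sigma)}{r}$ times a nonnegative $\compound{X}{r}$ with no zero column is strictly positive --- is likewise a genuine argument where the paper offers only a reference, and it is the most valuable part of your write-up. One small caveat, shared with the statement as printed: strict total positivity of $F(\sigma)$ (and hence your Vandermonde argument, which needs distinct nodes) holds only for $\sigma>0$, since $F(0)$ is the all-ones matrix; this is harmless because parts (iii) and (iv), where strictness is used, are stated for $\sigma>0$ anyway.
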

\begin{proof}
Parts (i)-(iii) are proven in \cite[p.220]{karlin1968total}, while part (iv)
is a consequence of (ii) and the continuity of the minors of a matrix in its
entries (see~e.g.~\cite{horn2012matrix}).
\end{proof}

\subsection{Hankel $k$-positivity and compound systems}

The $\ovd{k}$ property of LTI systems \cref{eq:SISO_d} has been studied in \cite{grussler2020variation}, where a distinction is made between LTI systems with $\ovd{k}$ Toeplitz and Hankel operators. The latter are particularly relevant to this work.
\begin{defn}
\label{def:ex_Hankel}
    A system $G(z)$ is called \emph{Hankel $\tp{k}$} if $\Hank{g}$ is $\ovd{k-1}$
    ($k \geq 1$). If $k=\infty$, $G(z)$ is said to be \emph{Hankel totally positive}.  
\end{defn}
In other words, $G(z)$ is $\ovd{k-1}$ from past inputs to future outputs. Note that if $G(z)$ is Hankel $\tp{k}$, then it is also Hankel $\tp{j}$, $1\le j \le k$. Since 
an $\ovd{k-1}$ $\Hank{g}$ maps nonnegative inputs to nonnegative outputs, it can be verified that Hankel $1$-positivity coincides with the familiar property of {external positivity}.
\begin{defn}
    \label{def:external_positivity}
    $G(z)$ is \emph{externally positive} if 
    $y \in \mathds{R}^{\mathds{Z}_{\geq 0}}_{\geq 0}$ for all 
   $u \in \mathds{R}^{\mathds{Z}_{\geq 0}}_{\geq 0}$ (and $x(0) = 0)$.
\end{defn}
A central observation of \cite{grussler2020variation} 
is the following characterization involving $k$-positive matrices.
\begin{lem}\label{lem:var_dim_op}
    A system $G(z)$ is Hankel $k$-positive if and only if for all $j \in \mathds{Z}_{> 0}$, $H_g(1,j)$ is $\tp{k}$.
\end{lem}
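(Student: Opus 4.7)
The plan is to use Proposition 2.3 as the bridge between the algebraic property $\tp{k}$ (on finite matrices) and the analytic property $\ovd{k-1}$ (on linear maps), and to lift the equivalence to the infinite-dimensional operator $\Hank{g}$ by a truncation/approximation argument. The key observation is that $H_g(1,j)$ is the matrix representation of the restriction of $\Hank{g}$ to inputs supported on $\{-j,\dots,-1\}$ with outputs read off at times $\{0,\dots,j-1\}$. More precisely, if we set $\tilde u\in\mathbb{R}^j$ with $\tilde u_\tau := u(-\tau)$, then \cref{eq:def_hank_disc} gives
\begin{equation*}
(H_g(1,j)\tilde u)_{t+1} = (\Hank{g} u)(t), \quad t=0,\dots,j-1,
\end{equation*}
and because variation is invariant under reversal and under appending zeros, the relevant variations of $u$ and $\tilde u$ (and of $y$ and its truncation) coincide up to the fixed ordering convention used in \cref{def:ovd_k}.

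For the forward implication I assume $\Hank{g}$ is $\ovd{k-1}$, fix $j$, and take any $\tilde u\in\mathbb{R}^j$ with $\vari{\tilde u}\le k-1$. The corresponding $u$ has the same variation, so $\vari{\Hank{g} u}\le k-1$, and truncating the output to the first $j$ entries can only decrease variation. The sign-preservation part of $\ovd{k-1}$ transfers in the same way: whenever the truncated output has the same variation as $\tilde u$, it agrees with the full $\Hank{g} u$ on all sign changes, so its leading sign is dictated by that of $\Hank{g} u$, which in turn matches that of $u$. Thus $H_g(1,j)$ is $\ovd{k-1}$, and \cref{prop:k_pos_mat_var} yields $\tp{k}$ (with the usual convention that for $j<k$ this reduces to $\tp{j}$, which is vacuously implied).

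For the converse, \cref{prop:k_pos_mat_var} tells me each $H_g(1,L)$ is $\ovd{k-1}$. Given $u\in\ell_\infty$ with $\vari{u}\le k-1$, I approximate $u$ by truncations $u^{(N)}$ supported on $\{-N,\dots,-1\}$, for which $\vari{u^{(N)}}\le\vari{u}$. For any $L\ge N$, the first $L$ outputs of $\Hank{g}u^{(N)}$ are exactly $H_g(1,L)$ applied to the zero-padded reversal of $u^{(N)}$, so they form a vector of variation at most $k-1$; letting $L\to\infty$ the truncated variation is nondecreasing in $L$, giving $\vari{\Hank{g} u^{(N)}}\le k-1$. Since $g\in\ell_1$, $\Hank{g}u^{(N)}\to\Hank{g}u$ pointwise, and variation is lower semicontinuous in pointwise limits (if $\vari{\Hank{g}u}\ge k$, the $k$ sign-changes occur at finite positions where $\Hank{g}u^{(N)}$ eventually agrees in sign with $\Hank{g}u$, forcing $\vari{\Hank{g}u^{(N)}}\ge k$ for large $N$, a contradiction). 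Hence $\vari{\Hank{g}u}\le k-1$, and sign preservation follows by the same kind of pointwise/limit argument applied to the first nonzero entry (which, when $\vari{\Hank{g}u}=\vari{u}$, must occur at a finite time).

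The main technical obstacle is the limit passage in the converse direction: variation is only lower semicontinuous under pointwise convergence, so some care is needed to rule out that sign-changes appear in the limit that are not present in the approximants, and the ordering conventions underlying the sign-preservation clause of \cref{def:ovd_k} must be matched between the $\mathbb{Z}$-indexed sequence view of $u$ and the finite vector $\tilde u$. Once these bookkeeping issues are handled, together with the trivial small-$j$ edge cases, both directions collapse to an application of \cref{prop:k_pos_mat_var} to the finite Hankel blocks.
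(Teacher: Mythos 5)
First, a framing remark: the paper does not prove this lemma at all --- it is recalled from \cite{grussler2020variation} as a preliminary --- so your argument has to stand on its own. Its skeleton is the natural one: identify $H_g(1,j)$ as the matrix of $\Hank{g}$ restricted to inputs supported on $\{-j,\dots,-1\}$ with the output truncated to $\{0,\dots,j-1\}$, invoke \cref{prop:k_pos_mat_var}, and pass to the limit in $j$ and in the support of $u$. The forward direction is complete (the prefix/sandwich argument for the sign clause works). In the converse there is one minor slip: \cref{def:ovd_k}(i) demands $\vari{\Hank{g}u}\le\vari{u}$, not merely $\le k-1$, and your chain of estimates only delivers the weaker bound; this is harmless, since running the same argument with $m:=\vari{u}$ in place of $k-1$ (the finite matrices give $\vari{H_g(1,L)\tilde u}\le\vari{\tilde u}\le m$) fixes it.

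The genuine gap is the order-preserving clause in the converse. You assert that sign preservation ``follows by the same kind of pointwise/limit argument applied to the first nonzero entry,'' but pointwise convergence $\Hank{g}u^{(N)}\to\Hank{g}u$ only controls the sign at the first nonzero time $t_0$ of $\Hank{g}u$; it does not control the first nonzero entry of $\Hank{g}u^{(N)}$, which is what the finite-dimensional OVD property actually speaks about. At times $t<t_0$ where $(\Hank{g}u)(t)=0$, the approximants may be nonzero with either sign (compare $y=(0,1,\dots)$, $y^{(N)}=(-1/N,1,\dots)$), so the leading sign need not pass to the limit. The step can be repaired, but it needs an idea you do not state: suppose $\vari{u}=\vari{\Hank{g}u}=m$ and the leading signs of $u$ and $\Hank{g}u$ disagree. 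Pick witnesses $t_0=q_1<\dots<q_{m+1}$ of the $m$ alternations of $\Hank{g}u$ starting at its first nonzero entry; for large $N$, $\Hank{g}u^{(N)}$ carries the same signs at these positions, and moreover $\vari{u^{(N)}}=m$, so $\vari{\Hank{g}u^{(N)}}=m$ and, for $L$ large enough that the length-$L$ prefix attains this variation and contains the first nonzero entry, clause (ii) for $H_g(1,L)$ forces the leading entry of $\Hank{g}u^{(N)}$ to carry the sign of $u$. That entry sits at or before $q_1$: if at $q_1$ the two signs clash outright, and if strictly before, it creates an $(m+1)$-st alternation against the signs at $q_1<\dots<q_{m+1}$, contradicting $\vari{\Hank{g}u^{(N)}}\le\vari{u^{(N)}}=m$. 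With this insertion (everything it uses is already in your proposal), the converse closes; as written, the sign-preservation step fails.
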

Using \cref{prop:consecutive,prop:stp_aux_matrix}, it is easy to show that 
$k$-positivity of Hankel matrices only require checking the nonnegativity of 
consecutive minors \cite{fallat2017total}. 
From \cref{eq:hankel_mat_decomp}, each of these consecutive minors is given by
\[ 
    g_{[j]}(t) := \det(H_g(t,j)),
\] 
which is interpreted as the impulse response of an LTI system $G_{[j]}(z)$, called the \emph{$j$-th compound system}. The compound systems
feature in the following characterization.
\begin{prop} \label{prop:con_minor}
Given $G(z)$ and $1 \leq k \leq n$, the following are equivalent:
		\begin{enumerate}[i.]
		\item $G(z)$ is Hankel $\tp{k}$. 
		\item $G_{[j]}(z)$ is externally positive for $1 \leq j \leq k$. \label{item:Hankel_comp}
		\item $H_g(1,k-1) \succ 0$, $H_g(2,k-1) \succeq 0$ and $G_{[k]}(z)$ is externally positive. 
		\item $G_{[j]}$ is Hankel $\tp{k-j+1}$ for $1\leq j \leq k$.
	\end{enumerate} 
\end{prop}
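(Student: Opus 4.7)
I would treat the equivalence (i)$\Leftrightarrow$(ii) as the cornerstone and derive (iv) and (iii) from it via structural identities for compound matrices. The key observation is that the consecutive $j$-minors of the (infinite) Hankel matrix of $g$ are exactly the samples $g_{[j]}(t) = \det H_g(t,j)$ of the impulse response of the $j$-th compound system $G_{[j]}(z)$, so that \cref{lem:var_dim_op} and \cref{prop:consecutive} become the engine that translates between $\tp{k}$ of Hankel blocks and external positivity of compound systems.

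\textbf{(i) $\Leftrightarrow$ (ii).} By \cref{lem:var_dim_op}, (i) is equivalent to $H_g(1,j)$ being $\tp{k}$ for every $j \geq 1$. For the forward direction, $\tp{k}$ of $H_g(1,j)$ forces nonnegativity of every $r$-minor with $r \leq k$, in particular of each consecutive $r$-minor, which equals $g_{[r]}(t)$ for some $t$; letting $j$ vary recovers all samples of $g_{[r]}$, giving external positivity of $G_{[r]}$ for $1 \leq r \leq k$. The converse gives only nonnegativity of every consecutive $r$-minor, and \cref{prop:consecutive} requires the lower orders to be \emph{strictly} positive. I resolve this via \cref{prop:stp_aux_matrix}: the smoothed matrix $F(\sigma) H_g(1,j)$ has, by \cref{prop:stp_aux_matrix}(iii), strictly positive consecutive minors of orders $1, \dots, k-1$ for $\sigma > 0$ (the required rank condition is ensured by passing to a minimal realization), so \cref{prop:consecutive} delivers $\tp{k}$ of the smoothed matrix; sending $\sigma \to \infty$ and invoking \cref{prop:stp_aux_matrix}(iv) transfers $\tp{k}$ back to $H_g(1,j)$, and \cref{lem:var_dim_op} closes the loop.

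\textbf{(i) $\Leftrightarrow$ (iv).} The $j=1$ case of (iv) is exactly (i), so only (i)$\Rightarrow$(iv) needs work. Using \cref{lem:compound_mat}(i) together with the factorization \cref{eq:hankel_mat_decomp}, the $j$-th compound system admits the state-space realization $(\compound{A}{j},\, \compound{\Con{j}(A,b)}{j},\, \compound{\Obs{j}(A,c)}{j})$ of dimension $\binom{n}{j}$, so its Hankel matrix factorizes analogously. Applying the Cauchy--Binet identity once more, each consecutive $r$-minor of $H_{g_{[j]}}$ unfolds as a nonnegative combination of $(r+j-1)$-order minors of $H_g$ (a Sylvester-type identity for iterated compounds), and the $\tp{k}$ hypothesis from (i)$\Leftrightarrow$(ii) kills all such terms for $r \leq k-j+1$; \cref{lem:var_dim_op} applied to $G_{[j]}$ then yields (iv).

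\textbf{(i) $\Leftrightarrow$ (iii) and main obstacle.} For (i)$\Rightarrow$(iii), a $\tp{k}$ Hankel matrix is automatically symmetric positive semidefinite---symmetry is structural and semidefiniteness follows from nonnegativity of its principal minors, which have order at most $k-1$---and positive definiteness of $H_g(1,k-1)$ follows from minimality of the realization (any rank drop would force the realization order below $k-1$). For (iii)$\Rightarrow$(i), $H_g(1,k-1) \succ 0$ provides strict positivity of its leading principal minors; together with $H_g(2,k-1) \succeq 0$ and the Hankel structure, this can be propagated to strict positivity of all consecutive minors of orders $\leq k-1$ in arbitrarily large $H_g(1,\ell)$; combined with the nonnegativity of consecutive $k$-minors supplied by external positivity of $G_{[k]}$, \cref{prop:consecutive} yields $\tp{k}$ of every $H_g(1,\ell)$, and \cref{lem:var_dim_op} delivers (i). The main obstacle I foresee is exactly this propagation step: bootstrapping two finite definiteness conditions of size $k-1$ into $\tp{k}$ of every finite Hankel block. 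I would attack this via Sylvester-type bordering identities for Hankel determinants combined with the Gaussian smoothing of \cref{prop:stp_aux_matrix}, so that strict positivity at one scale propagates to all larger blocks while preserving the Hankel structure.
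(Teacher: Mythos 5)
There is a genuine gap, and it sits at the step you lean on hardest. First, note that the paper does not prove \cref{prop:con_minor} at all: it is imported from \cite{grussler2020variation}, and the one genuinely Hankel-specific ingredient --- that $k$-positivity of Hankel matrices can be certified by \emph{consecutive} minors alone --- is attributed to \cite{fallat2017total}. Your argument for (ii)~$\Rightarrow$~(i) tries to reproduce exactly this ingredient from \cref{prop:consecutive} and \cref{prop:stp_aux_matrix}, and it fails as written: \cref{prop:stp_aux_matrix}(iii) has the hypothesis $\compound{X}{r}\ge 0$, i.e.\ nonnegativity of \emph{all} $r$-minors of $H_g(1,j)$, whereas external positivity of $G_{[r]}(z)$ only gives the fully consecutive minors $g_{[r]}(t)=\det H_g(t,r)$. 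Invoking (iii) there is circular --- its hypothesis is precisely the $\tp{k}$ conclusion you are after. Nor can the smoothing rescue you at the level of consecutive minors: by Cauchy--Binet, a consecutive $r$-minor of $F(\sigma)H_g(1,j)$ is a sum over \emph{arbitrary} row subsets of $r$-minors of $H_g(1,j)$ with a consecutive column set, and the signs of those non-consecutive-row minors are exactly what is unknown. Closing this requires the Hankel-specific structure (the argument behind \cite{fallat2017total}, or Karlin's treatment of Hankel kernels in \cite{karlin1968total}); the "rank condition via minimality" you mention addresses a different hypothesis of \cref{prop:stp_aux_matrix}(iii) and does not touch the real problem. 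The analogous issue is why the paper's own \cref{thm:cont_k_pos} works only in the \emph{internal} setting: there, consecutive column blocks are of the form $A^i\Con{j}(A,b)$ and the assumed nonnegativity of $\compound{\Con{j}(A,b)}{j}$ supplies all maximal minors of those blocks, which is precisely the information you do not have externally.

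The other two equivalences also rest on asserted rather than proven facts. For (iii)~$\Rightarrow$~(i) you state yourself that the propagation from the two finite conditions $H_g(1,k-1)\succ 0$, $H_g(2,k-1)\succeq 0$ to strict positivity of all consecutive minors of order $\le k-1$ in every $H_g(1,\ell)$ is an open obstacle; but that propagation \emph{is} the content of this implication (it is a finite-section, moment-problem-type statement), so the proof of (iii) is missing rather than merely rough. Likewise, your (i)~$\Rightarrow$~(iv) hinges on an unproved claim that consecutive $r$-minors of $H_{g_{[j]}}$ are nonnegative combinations of minors of $H_g$ of order $r+j-1$; this is not a routine Cauchy--Binet computation (minors of a compound matrix are not simply higher-order minors of the original), and the standard route is Karlin's result that the $j$-th compound kernel of a kernel that is $\tp{k}$ inherits $\tp{k-j+1}$ \cite{karlin1968total}. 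Also, in (i)~$\Rightarrow$~(iii) the strictness $H_g(1,k-1)\succ 0$ does not follow from minimality as directly as you claim --- a singular finite section does not by itself bound the rank of the infinite Hankel matrix; one needs the $\tp{k}$/PSD structure to propagate the degeneracy (a flat-extension-type argument) before contradicting the order $n\ge k$. In short, the skeleton (using \cref{lem:var_dim_op} and the identification of consecutive minors with $g_{[j]}$) is the right starting point, but each nontrivial implication currently rests on a step that either fails (the circular use of \cref{prop:stp_aux_matrix}(iii)) or is conceded rather than supplied.
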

In particular, the equivalence between Hankel $\ovd{0}$ and external positivity becomes evident as both properties require $g_{[1]} = g \geq 0$ \cite{Farina2000}.

A key fact for our new investigations is that 
if $(A,b,c)$ is a realization of $G(z)$, then $G_{[j]}(z)$ can be realized as
\begin{equation}
    \label{eq:comp_real}
    (\compound{A}{j}, \compound{\Con{j}(A,b)}{j},\compound{\Obs{j}(A,c)}{j}).
\end{equation} 
Note that by \cref{eq:hankel_mat_decomp}, $g_{[j]} = 0$ if $j >n$, which is why $k=n$ coincides with the case $k=\infty$. The following pole constraints of Hankel $\tp{k}$ systems will also be important for our new developments. 

\begin{prop}\label{prop:repeated}
	Let $G(z) = \sum_{a=1}^{l} \sum_{b=1}^{m_a} \frac{r_{ba}}{(z-p_a)^b}$ be Hankel $\tp{k}$. Then, $m_1 = \dots = m_{k-1} = 1$ and $p_{k-1} > 0$ if $k \leq \sum_{a = 1}^l m_a$. In particular, $G(z)$ is Hankel totally positive if and only if all poles are nonnegative and simple. 
\end{prop}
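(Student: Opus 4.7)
Plan: I would use \cref{prop:con_minor} to reduce the Hankel $\tp{k}$ assumption to external positivity (and Hankel $\tp{2}$) of the compound systems $G_{[j]}(z)$, then apply the Pringsheim--Vivanti theorem combined with a log-convexity argument to extract the pole-structure constraints on $G(z)$.

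First, by \cref{prop:con_minor}, parts (ii) and (iv), each $G_{[j]}(z)$ is externally positive for $1 \le j \le k$, and Hankel $\tp{2}$ for $1 \le j \le k-1$. The Pringsheim--Vivanti theorem applied to the nonneg-coefficient power series $G_{[j]}(w^{-1}) = \sum_{t \ge 1} g_{[j]}(t) w^t$ guarantees a singularity of $G_{[j]}$ on the positive real axis at the radius of convergence, so the dominant pole of $G_{[j]}$ is real and positive (whenever $g_{[j]}$ is nontrivial, which holds for $j \le n := \sum_a m_a$ when $(A,b,c)$ is minimal). Furthermore, for Hankel $\tp{2}$ systems, log-convexity of $g_{[j]} \in \lint$ makes $g_{[j]}(t+1)/g_{[j]}(t)$ monotone nondecreasing with limit equal to the dominant pole magnitude; a comparison with the asymptotics $g_{[j]}(t) \sim C t^{m-1} \rho^{t-1}$ of a hypothetical Jordan block of size $m \ge 2$ at the dominant pole $\rho$ yields a monotone \emph{decreasing} ratio, contradicting log-convexity. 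Hence the dominant pole of $G_{[j]}$ is simple for $1 \le j \le k-1$.

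Second, I would translate these facts into constraints on the poles of $G(z)$ via \cref{lem:compound_mat}(ii), which identifies the eigenvalues of $\compound{A}{j}$ as products $\prod_{i \in I} p_i$, $I \in \mathcal{I}_{n,j}$, with the unique maximum-magnitude one given by $\prod_{i=1}^j p_i$ (assuming $|p_{j-1}| > |p_j|$ strictly). By induction on $j$ from $1$ to $k-1$, with hypothesis $p_1 > \dots > p_{j-1} > 0$ simple real: realness of the dominant pole of $G_{[j]}$ combined with $p_1, \dots, p_{j-1} > 0$ forces $p_j > 0$ real, while the strict magnitude ordering is maintained by simplicity of $p_{j-1}$ (a tie $|p_{j-1}| = |p_j|$ with $p_j \ne p_{j-1}$ would create multiple maximum-magnitude eigenvalues in $\compound{A}{j}$, none necessarily real positive). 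Simplicity $m_j = 1$ then follows since the eigenvalue $\prod_{i=1}^j p_i$ has algebraic multiplicity $m_j$ in $\compound{A}{j}$ (the $p_1, \dots, p_{j-1}$ being simple), and the dominant pole of $G_{[j]}$ is simple by the argument in the previous paragraph.

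Third, for the ``in particular'' claim on Hankel total positivity, letting $k \to \infty$ gives the forward direction. For the converse, if $G(z) = \sum_i r_i/(z - p_i)$ with $r_i, p_i \ge 0$, then $g(t) = \sum_i r_i p_i^{t-1}$ admits the Hankel factorization $H_g = V \operatorname{diag}(r_1, \dots, r_l) V^\transp$ with $V$ Vandermonde at the nonneg distinct nodes $p_i$; total positivity of such Vandermonde matrices together with Cauchy--Binet (\cref{lem:compound_mat}(i)) yields total positivity of $H_g$. The main obstacle is rigorously excluding pole-zero cancellations in the compound realization $(\compound{A}{j}, \compound{\Con{j}(A,b)}{j}, \compound{\Obs{j}(A,c)}{j})$: \emph{a priori} the maximum-magnitude eigenvalue of $\compound{A}{j}$ need not be a pole of $G_{[j]}$, which would weaken the Pringsheim step. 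I anticipate resolving this via the smoothing perturbation in \cref{prop:stp_aux_matrix}, or by directly showing that distinctness and simplicity of $p_1, \dots, p_{j-1}$ imply non-cancellation of the top eigenvalue of $\compound{A}{j}$.
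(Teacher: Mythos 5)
The paper itself does not prove this proposition --- it is quoted as a preliminary from \cite{grussler2020variation} --- so your proposal has to stand on its own, and as written it has two genuine gaps. The first you name yourself but do not close: the pole--zero cancellation issue. Your entire induction hinges on the dominant pole of $G_{[j]}$ being exactly $\prod_{i=1}^{j}p_i$ with the full multiplicity $m_j$ visible as a pole order; if the left/right eigenvector components of $\compound{\Obs{j}(A,c)}{j}$ and $\compound{\Con{j}(A,b)}{j}$ along the top eigenvalue of $\compound{A}{j}$ vanish (or partially cancel a Jordan block), both the positivity step and the simplicity step collapse, because Pringsheim--Vivanti then only constrains some smaller-modulus product. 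This is provable (e.g.\ via a Vandermonde/minor computation like $\det(\Obs{j}(A,c)\,[\xi_1\;\cdots\;\xi_j])=\det(V)\prod_m c\xi_m$ under the induction hypothesis of distinct simple dominant eigenvalues, plus a separate treatment of a possible Jordan block or conjugate pair at position $j$), but "I anticipate resolving this" is not a proof, and it is exactly the technical heart of the statement.

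The second gap is the tie case in your induction step, where your parenthetical argument is actually wrong as stated. Suppose $p_1>\dots>p_{j-1}>0$ are simple and $p_j$ is a complex pole with $|p_j|=p_{j-1}$ (so $p_{j+1}=\bar p_j$ has the same modulus). Then the maximal-modulus eigenvalues of $\compound{A}{j}$ include not only the conjugate pair $p_1\cdots p_{j-1}p_j$ and $p_1\cdots p_{j-1}\bar p_j$ but also the \emph{positive real} number $p_1\cdots p_{j-2}\,p_j\bar p_j=p_1\cdots p_{j-2}|p_j|^2$, which has exactly the same modulus. Hence Pringsheim--Vivanti is satisfied without forcing $p_j$ to be real positive, and your claim that a tie produces "multiple maximum-magnitude eigenvalues, none necessarily real positive" does not yield a contradiction. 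Excluding this configuration needs a further argument, e.g.\ pushing the log-convexity of $g_{[j]}$ (Hankel $\tp{2}$ of the compound system, your own tool) to rule out persistent oscillation at the dominant modulus, not just a repeated dominant pole. Two smaller points: the case $p_j=0$ (finite-support $g_{[j]}$, all compound poles at the origin) is not covered by your asymptotic reasoning and should be excluded via $H_g(1,k-1)\succ 0$ from \cref{prop:con_minor}; and your converse in the "in particular" part silently adds $r_i\ge 0$, which is not in the literal hypothesis "nonnegative and simple poles" --- residue positivity has to be extracted (as the paper later does when it invokes this proposition in the proof of \cref{prop:Hankel_total_real}), e.g.\ from positive semidefiniteness of the Hankel matrices. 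With those repairs your route (compound systems, external positivity plus Pringsheim for the dominant pole, Hankel $\tp{2}$ log-convexity for simplicity) is a plausible and natural one, but in its current form it is a sketch with the decisive steps missing.
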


\section{Internally Hankel $k$-positive systems}
\label{sec:internal_hankel_kpos_DT}
In this section, we introduce and study a subclass of Hankel $\tp{k}$ systems 
which admit state-space realizations such that the $\ovd{k-1}$ property also holds
internally.
\begin{defn}\label{def:int_Hankel}
    $(A,b,c)$ is called \emph{internally Hankel $\tp{k}$} if $A$, $\Con{}(A,b)$, 
    and $\Obs{}(A,c)$ are $\ovd{k-1}$ ($k \geq 1$). If $k=\infty$, we
    say that $(A,b,c)$ is \emph{internally Hankel totally positive}.  
\end{defn}
\emph{Internally Hankel $\tp{k}$} systems are, therefore, $\ovd{k-1}$ 
from past input $u$ to $x(0)$, and from $x(0)$ to all future $x(t)$ and 
future output $y$. 
In particular, 
by \cref{eq:Hankel_fac}, 
all internally Hankel $\tp{k}$ systems are also Hankel $\tp{k}$, and
setting $u \equiv 0$ recovers the $\tp{k}$ property of autonomous
systems as partially studied in \cite{margaliot2018revisiting,alseidi2021discrete}. Thus, \cref{def:int_Hankel} bridges the external and the autonomous notions of
variation diminishing LTI systems.
In the remainder of this section, we aim to answer the following main questions:
\begin{enumerate}[I.]
    \item How does internal Hankel $k$-positivity manifest as tractable algebraic properties of $(A,b,c)$?
    \item When does a system have a minimal internally Hankel $k$-positive realization?
\end{enumerate}
Our answers will generalize the well-known case of $k = 1$
\cite{ohta1984reachability,anderson1996nonnegative,benvenuti2004tutorial,Farina2000,luenberger1979introduction}, which, we will see, coincides with the familiar class of internally positive systems \cite{Farina2000}.
\begin{defn}
    \label{def:internal_pos}
	$(A,b,c)$ is said to be \emph{internally positive}
	if for all $u \in \mathds{R}^{\mathds{Z}_{\geq 0}}_{\geq 0}$ and all $x(0) \ge 0$, it follows that $y \in \mathds{R}^{\mathds{Z}_{\geq 0}}_{\geq 0}$
	and	$x(t)\ge 0$ for all $t\ge 0$.
\end{defn}
In \Cref{sec:cont}, our findings are extended to continuous-time systems, 
and we use our result to establish a framework that upper bounds the variation of the impulse response in arbitrary LTI systems.

\subsection{Characterization of internally Hankel $k$-positive systems}
We start by recalling the following well-known characterization of internal positivity in terms of system matrix properties \cite{luenberger1979introduction}.
\begin{prop}
    \label{prop:internal_pos}
   $(A,b,c)$ is internally positive if and only if $A,b, c\geq 0$.
\end{prop}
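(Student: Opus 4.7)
The plan is to prove both directions by a direct argument using test inputs and initial conditions for necessity, and by induction on time for sufficiency. The result is essentially a folklore characterization; the proof should be short.

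For the sufficiency direction ($A,b,c\ge 0$ implies internal positivity), I would proceed by induction on $t$. Fix any $u\ge 0$ and $x(0)\ge 0$. The base case is trivial. For the induction step, assume $x(t)\ge 0$; then $x(t+1)=Ax(t)+bu(t)$ is a sum of two nonnegative vectors (since $A\in \Rnn_{\ge 0}$ maps the nonnegative orthant into itself, $b\ge 0$ and $u(t)\ge 0$), so $x(t+1)\ge 0$. Likewise $y(t)=cx(t)\ge 0$ because $c\ge 0$ and $x(t)\ge 0$. This gives internal positivity in the sense of \cref{def:internal_pos}.

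For the necessity direction, I would exploit the freedom to choose particular inputs and initial conditions and read off the sign of individual entries. To show $b\ge 0$, take $x(0)=0$ and $u(0)=1$, $u(t)=0$ for $t\ge 1$; then $x(1)=b$, which must lie in $\Rnv$ by internal positivity, so $b\ge 0$. To show $A\ge 0$, fix any column index $j$ and take $u\equiv 0$ with $x(0)=e_j$ (the $j$-th standard basis vector, which is nonnegative); then $x(1)=Ae_j$ is the $j$-th column of $A$, and internal positivity forces this column to be nonnegative. Running over all $j$ gives $A\ge 0$. Finally, for $c\ge 0$, again use $x(0)=e_j$ and $u\equiv 0$; internal positivity requires $y(0)=c e_j = c_j \ge 0$ for every $j$.

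No step is really an obstacle: the only substantive ingredient is that $\Rnv$ is a convex cone invariant under addition and scaling by nonnegative scalars, so finite sums and products with nonnegative matrices/vectors preserve nonnegativity. I would keep the writeup compact, essentially a couple of lines per direction, since the proposition is presented as a well-known baseline ($k=1$ case) that motivates the generalization to internally Hankel $k$-positive systems developed in the subsequent sections.
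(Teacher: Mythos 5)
Your proof is correct and complete: the induction argument for sufficiency and the choice of impulse inputs and standard-basis initial conditions for necessity are exactly the standard argument for this characterization. The paper itself states \cref{prop:internal_pos} as a recalled classical fact and cites the literature rather than proving it, so your writeup matches the intended (textbook) proof.
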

Therefore, internal positivity indeed implies that $(A,b,c)$ is internally Hankel $1$-positive (through \cref{prop:k_pos_mat_var}). The converse can be seen from the following equivalences, which give a first characterization of internal Hankel $k$-positivity.  

\begin{lem}\label{lem:var_dim_ss}
For $(A,b,c)$, the following are equivalent:
\begin{enumerate}[i.]
    \item $\Con{}(A,b)$ and $\Obs{}(A,c)$ are $\ovd{k-1}$, respectively.
    \item For all $t \geq k$, $\Con{t}(A,b)$ and $\Obs{t}(A,c)$ are $\tp{k}$, respectively.
\end{enumerate}

In particular, $(A,b,c)$ is internally Hankel $k$-positive if and only if $A$, $\Con{t}(A,b)$ and $\Obs{t}(A,b)$ are $k$-positive for all $t \geq k$.
\end{lem}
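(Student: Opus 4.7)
The plan is to transfer the $\ovd{k-1}$ property between each infinite operator $\Con{}(A,b)$ or $\Obs{}(A,c)$ and its finite truncations $\Con{t}(A,b)$ or $\Obs{t}(A,c)$, and then to invoke \cref{prop:k_pos_mat_var} to convert between $\ovd{k-1}$ and $\tp{k}$ for the finite matrices. The ``in particular'' claim will then follow by combining this equivalence with \cref{prop:k_pos_mat_var} applied to the square matrix $A$.

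For (i) $\Rightarrow$ (ii) on the controllability side, I would take an arbitrary $\tilde u \in \mathbb{R}^t$ and build the input $u \in \ell_\infty$ supported on $\{-1,\dots,-t\}$ via $u(-\tau) = \tilde u_\tau$; then $\Con{}(A,b)u = \Con{t}(A,b)\tilde u$ and $\vari{u} = \vari{\tilde u}$ by reversal invariance of the variation. The $\ovd{k-1}$ property of $\Con{}(A,b)$ thus restricts to $\Con{t}(A,b)$, whence \cref{prop:k_pos_mat_var} delivers $\tp{k}$; for the wide truncations $t>n$ the same implication holds via the general variation-diminishing correspondence underlying that proposition. The observability case is dual: for any $x_0$ with $\vari{x_0}\le k-1$ one has $\vari{\Obs{t}(A,c)x_0}\le \vari{\Obs{}(A,c)x_0}$, so $\Obs{t}(A,c)$ is $\ovd{k-1}$ and \cref{prop:k_pos_mat_var} gives $\tp{k}$ for tall truncations $t\ge n$. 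For $k\le t<n$, $\tp{k}$ of $\Obs{t}(A,c)$ follows immediately because it is a submatrix of the $\tp{k}$ matrix $\Obs{n}(A,c)$.

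For (ii) $\Rightarrow$ (i), I would exploit that any $u\in\ell_\infty$ with $\vari{u}\le k-1$ has all its sign changes within a finite past window $\{-T,\dots,-1\}$. Let $u_t$ be the truncation of $u$ to this window for $t\ge T$; then $\vari{u_t} = \vari{u}$ and $\Con{}(A,b)u_t = \Con{t}(A,b)\tilde u_t$. Combining (ii) with \cref{prop:k_pos_mat_var} yields $\vari{\Con{t}(A,b)\tilde u_t}\le\vari{u_t}$ with matched first-nonzero signs. Stability of $A$ together with $u\in\ell_\infty$ gives $\Con{}(A,b)u_t\to\Con{}(A,b)u$ in $\mathbb{R}^n$, and lower-semicontinuity of the variation under pointwise convergence transfers the variation bound to the limit. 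The observability direction is analogous, using $\vari{\Obs{}(A,c)x_0} = \sup_t \vari{\Obs{t}(A,c)x_0}$, so the per-truncation bound from (ii) propagates to the operator level.

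The ``in particular'' statement then follows by combining the equivalence (i) $\Leftrightarrow$ (ii) with \cref{prop:k_pos_mat_var} applied to the square matrix $A$, so that internal Hankel $\tp{k}$ of $(A,b,c)$ unfolds as the conjunction of $\tp{k}$ for $A$, $\Con{t}(A,b)$, and $\Obs{t}(A,c)$ for all $t\ge k$. The main technical hurdle I anticipate is the limit step in (ii) $\Rightarrow$ (i), specifically the preservation of the sign-matching condition across the limit; this should be handled by observing that once $\vari{\Con{t}(A,b)\tilde u_t}$ stabilizes at its eventual value, the position and sign of the first nonzero output entry are locked, letting the sign match descend to $\Con{}(A,b)u$.
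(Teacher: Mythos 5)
Your proposal is correct and takes essentially the same route as the paper: the forward direction by restricting the operators to finitely supported inputs (i.e., to the truncations $\Con{t}$, $\Obs{t}$) and invoking \cref{prop:k_pos_mat_var}, and the converse by bounding the variation of the truncated images and passing to the limit $t\to\infty$. Your handling of the sign-matching condition in the limit and of the wide truncations is in fact more explicit than the paper's own terse argument, which glosses over these points.
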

\begin{proof}
	By \cref{prop:k_pos_mat_var}, it suffices to show that  $\Con{}(A,b)$ is $\ovd{k-1}$ if and only if  $\Con{t}(A,b)$ is $\ovd{k-1}$ for all $t \geq k$. For $\Obs{}(A,c)$, the proof is analogous via the duality \cref{eq:truncated_obs}.
	\\
	$\Rightarrow$: Follows by considering inputs
    $u$ with $u(t) = 0$ for $t \ge k$. 
    \\
    $\Leftarrow$: Let $u$ be an input with $\vari{u} \le k-1$. 
    Since 
    \[ \vari{\Con{t}(A,b) 
    \begin{pmatrix}
        u(t-1) & \dotsc & u (0)
    \end{pmatrix}^\transp} \le k-1 \] %
    for all $t > 0$, in the limit $t\to \infty$ we obtain $\vari{\Con{}(A,b) u} \le k-1$.
\end{proof}

Next, we want to find a finite-dimensional and, thus, certifiable characterization of 
internal Hankel $k$-positivity. To this end, we derive our first main result: a sufficient condition for $k$-positivity of the
controllablility and observability operators.

\begin{thm}
\label{thm:cont_k_pos} 
Let $(A,b,c)$ be a realization of $G(z)$ such that $A \in \mathds{R}^{K \times K}$ is $k$-positive. Then,   
\begin{enumerate}[i.]
    \setlength\itemsep{.25em}
	\item if $\compound{\Con{j}(A,b)}{j} \geq 0$ for $1\leq j \leq k$ and $\rk(A^{K-j}\Con{j}(A,b)) = j$ for all $1\leq j \leq k-1$, then
	$\Con{t}(A,b)$ is $k$-positive for all $t \geq k$.
	\item if $\compound{\Obs{j}(A,c)}{j} \geq 0$ for $1\leq j \leq k$ and $\rk(\Obs{j}(A,c)A^{K-j}) = j$ for all $1\leq j \leq k-1$, then $\Obs{t}(A,c)$ is $k$-positive for all $t \geq k$.
\end{enumerate}
The rank constraints are fulfilled if $k$ does not exceed the order of the system and $p_{k-1} > 0$. %
\end{thm}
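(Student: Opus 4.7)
I will prove part (i); part (ii) follows by applying (i) to the dual realization $(A^\transp, c^\transp)$ via the transposition identity in \cref{eq:truncated_obs} together with \cref{lem:compound_mat}(iii). The driving idea is to combine Cauchy--Binet, applied to a factorization of consecutive columns of $\Con{t}(A,b)$, with the perturbation/continuity machinery of \cref{prop:stp_aux_matrix}.

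First I would observe that $\Con{t}(A,b)_{:,\{c+1,\ldots,c+j\}} = A^c\,\Con{j}(A,b)$, so \cref{lem:compound_mat}(i) gives, for any $I \in \mathcal{I}_{K,j}$,
\[
\det\!\bigl(\Con{t}(A,b)_{I,\{c+1,\ldots,c+j\}}\bigr) = \sum_{L \in \mathcal{I}_{K,j}} \det\!\bigl((A^c)_{I,L}\bigr)\,\det\!\bigl(\Con{j}(A,b)_{L,:}\bigr).
\]
Because $\compound{A^c}{j}=(\compound{A}{j})^c \ge 0$ by \cref{lem:compound_mat}(i) and the $k$-positivity of $A$, and $\compound{\Con{j}(A,b)}{j} \ge 0$ by hypothesis, every summand is nonnegative. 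Thus every $j$-minor of $\Con{t}(A,b)$ with consecutive column indices is nonnegative, for $1 \le j \le k$.

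This is not yet enough to apply \cref{prop:consecutive} and conclude $\Con{t}(A,b)$ is $\tp{k}$, because that criterion demands \emph{strict} positivity of all consecutive $j$-minors for $j<k$. To supply this, I would work with the perturbation $F(\sigma)\Con{t}(A,b)$ and apply Cauchy--Binet a second time: for a consecutive column set $J$ of size $j$ starting at index $c$,
\[
\det\!\bigl((F(\sigma)\Con{t}(A,b))_{I,J}\bigr) = \sum_L \det\!\bigl(F(\sigma)_{I,L}\bigr)\det\!\bigl(\Con{t}(A,b)_{L,J}\bigr).
\]
The first factor is strictly positive by \cref{prop:stp_aux_matrix}(i), and the second is nonnegative by the preceding paragraph. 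The sum is strictly positive iff some $\det(\Con{t}(A,b)_{L,J}) > 0$, equivalently iff $\rk(A^c\Con{j}(A,b)) = j$. For $c \le K-j$ this follows from $\rk(A^{K-j}\Con{j}) = j$ via the identity $A^{K-j}\Con{j} = A^{K-j-c}(A^c\Con{j})$; for $c > K-j$, Cayley--Hamilton together with the eigenstructure of $A$ afforded by \cref{cor:EW_compound} keeps $A^c$ injective on $\im{\Con{j}(A,b)}$.

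Consequently, for $1\le j \le k-1$ all consecutive $j$-minors of $F(\sigma)\Con{t}(A,b)$ are strictly positive, while all consecutive $k$-minors are nonnegative. \cref{prop:consecutive} then yields that $F(\sigma)\Con{t}(A,b)$ is $\tp{k}$ for every $\sigma>0$, and \cref{prop:stp_aux_matrix}(iv) delivers that $\Con{t}(A,b)$ is $\tp{k}$, i.e.\ $\ovd{k-1}$ by \cref{prop:k_pos_mat_var}; combined with the $k$-positivity of $A$ this establishes internal Hankel $k$-positivity. For the final assertion, if $G(z)$ has order at least $k$ and $p_{k-1}>0$, then \cref{prop:repeated} forces the first $k-1$ poles to be simple and positive, so $A$ has $k-1$ distinct positive eigenvalues; combined with minimality (so $\Con{K}(A,b)$ has full column rank) and the spectral consequences of \cref{cor:EW_compound}, an eigenvector-based argument yields $\rk(A^{K-j}\Con{j}(A,b)) = j$. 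The main obstacle is the rank-propagation step: extending the single hypothesis $\rk(A^{K-j}\Con{j}) = j$ to all $c \in \{0,\ldots,t-j\}$ is the critical place where the spectral structure of $A$ (rather than mere nonnegativity of compounds) must be exploited.
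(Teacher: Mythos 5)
Your main argument follows essentially the same route as the paper: perturb by the strictly totally positive $F(\sigma)$, use that every block of $j$ consecutive columns of $\Con{t}(A,b)$ equals $A^c\Con{j}(A,b)$, so that by Cauchy--Binet its $j$-minors are entries of $(\compound{A}{j})^c\compound{\Con{j}(A,b)}{j}\geq 0$, invoke \cref{prop:consecutive}, and let $\sigma\to\infty$ via \cref{prop:stp_aux_matrix}; replacing the paper's induction on $j$ by a single application of \cref{prop:consecutive} at level $k$ is a harmless reorganization. The genuine gap is exactly where you flag it: propagating the rank hypothesis from the single exponent $K-j$ to all exponents $c$. Your factorization handles $c\le K-j$, but for $c>K-j$ neither Cayley--Hamilton nor \cref{cor:EW_compound} delivers the claim: \cref{cor:EW_compound} requires irreducibility of the compounds $\compound{A}{j}$, which the theorem does not assume (the paper treats irreducibility as an extra hypothesis in \cref{prop:irred_A_b} precisely because it can fail), and injectivity of $A^{K-j}$ on a $j$-dimensional subspace does not by itself imply injectivity of higher powers --- e.g.\ for $A$ a nilpotent Jordan block of size $K$, $b=e_K$ and $j=1$ one has $A^{K-1}b\neq 0$ yet $A^{K}b=0$. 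Any correct argument must exploit the Krylov structure of $\im{\Con{j}(A,b)}$ together with the remaining hypotheses; the paper settles this step through the Jordan form of $A$, so this is the one place where your proposal is materially incomplete rather than merely terse.

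Your justification of the final assertion is also off target. \cref{prop:repeated} presupposes that $G(z)$ is Hankel $\tp{k}$, which is not among the hypotheses of the theorem, and you invoke minimality of $(A,b,c)$, which is likewise not assumed. The paper instead passes to a minimal realization $(N,g,h)$ via the Kalman decomposition, uses the inequality $\rk(A^{i}\Con{j}(A,b))\geq\rk(N^{i}\Con{j}(N,g))$, and then derives $\rk(N^{K-j}\Con{j}(N,g))=j$ for $1\leq j\leq k-1$ from controllability of $(N,g)$ together with $p_{k-1}>0$: since $g$ is a cyclic vector, $N^{i}q(N)g=0$ with $q\neq 0$ and $\deg q\leq j-1\leq k-2$ would require the nonzero-root part of the characteristic polynomial of $N$, whose degree is at least $k-1$ because the $k-1$ largest poles are nonzero, to divide $q$, which is impossible. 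Repairing your write-up along these two lines would bring it in line with the paper's proof.
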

\begin{proof}
Since the case $k=1$ is trivial, we assume $k>1$. We only prove the first item as the second follows by duality. Assume that (i) holds, and $F(\sigma)$ is as in \cref{prop:stp_aux_matrix}. We will show now by induction on $j$ that
for all $\sigma>0$ and $t\geq j$,
\begin{equation}\label{eq:induction_ass}
\compound{(F(\sigma)\Con{t}(A,b))}{j} \; \text{is  }
    \left\{
    \begin{array}{cc}
        \text{positive}, & \text{for } j<k  \\
        \text{nonnegative}, & \text{for } j=k 
    \end{array}
    \right.
\end{equation}
Then, by \cref{prop:stp_aux_matrix}, it follows that 
$\lim_{\sigma \to \infty}
\compound{(F(\sigma)\Con{t}(A,b))}{j}  = \compound{\Con{t}(A,b)}{j}$ is nonnegative for all 
$t \ge j$ and $1\leq j \leq k$, and thus $k$-positivity of $\Con{t}(A,b)$
is proven for all $t \ge k$.

To prove \eqref{eq:induction_ass}, first notice that if $\rk(A^{K-j}\Con{j}(A,b)) = j$, then through the Jordan form of $A$, it is easy to show that $\rk(A^{i}\Con{j}(A,b)) = j$ for all $i \in \mathds{Z}_{\geq 0}$. Further, $\compound{(A^i \Con{j}(A,b))}{j} = \compound{A}{j}^i \compound{\Con{j}(A,b)}{j}
\ge 0$  by
\cref{lem:compound_mat} and therefore \cref{prop:stp_aux_matrix} implies that for all
$\sigma>0$ and $i \in \mathds{Z}_{\geq 0}$:
\begin{equation}
    \label{eq:strictness_induction}
    \compound{(F(\sigma)A^i \Con{j}(A,b))}{j} \; \text{is }
    \left\{
    \begin{array}{cc}
        \text{positive}, & \text{for } j<k  \\
        \text{nonnegative}, & \text{for } j=k 
    \end{array}
    \right.
\end{equation}
We are now ready to prove the induction on $j$:

\emph{Base case} ($j=1$):
Taking $j=1$ in \cref{eq:strictness_induction}, it follows that
\begin{equation}
    \label{eq:induction_base}
    F(\sigma)\Con{t}(A,b)  \; \text{is 
    positive for all } \; t\geq 1.
\end{equation}

\emph{Induction step ($j>1$)}:
Let us now assume that \cref{eq:induction_ass} holds true for all 
$1 \leq j \leq j^\ast-1 < k$. We want to show that \cref{eq:induction_ass}
also holds for $j=j^\ast$. 
To this end, note that for any $t\ge j^\ast$, all consecutive $j^\ast$
columns of $F(\sigma) \Con{t}(A,b)$ are of the form 
$F(\sigma) A^i \Con{j^\ast}(A,b)$ 
for some 
$i\in\mathbb{Z}_{\ge 0}$. 
Thus by \cref{eq:strictness_induction} all consecutive $j^\ast$-minors of
$F(\sigma) \Con{t}(A,b)$ are 
positive (resp. nonnegative) when $j^\ast < k$ (resp. $j^\ast = k$).
This fact, in conjunction with the strict $(j^\ast-1)$-positivity of 
$F(\sigma) \Con{t}(A,b)$ (the induction hypothesis), implies through
\cref{prop:consecutive} that $F(\sigma) \Con{t}(A,b)$ is 
strictly $j^\ast$-positive when $j^\ast < k$, and 
$j^\ast$-positive when $j^\ast = k$. 
In particular, \cref{eq:induction_ass} holds for $j=j^\ast$, and the
induction is proven. 

Let us assume now that $(N,g,h)$ is an $n$-th order minimal realization of $(A,b,c)$, $k \leq n$ and
$p_{k-1} > 0$. By the Kalman controllability and observability forms there exists then a 
$T \in \mathds{R}^{K \times K}$ with
    \begin{align*}
        T^{-1} A T = 
        \begin{pmatrix} 
        N & 0 & \ast \\
        \ast & \ast & \ast\\
        0 & 0 & \ast
        \end{pmatrix}, \;  T^{-1}b = \begin{pmatrix} g \\ \ast \\ 0
    \end{pmatrix},\; c T \begin{pmatrix} h & 0 & \ast \end{pmatrix}
    \end{align*}
    Thus, $$A^i\Con{j}(A,b) = T \begin{pmatrix} N^{i}\Con{j}(N,g) \\ \ast \\ 0
    \end{pmatrix}$$ and $\rk(A^i\Con{j}(A,b)) \geq \rk(N^i\Con{j}(N,g))$. Then, since $p_{k-1} > 0$, it follows from the controllablity of $(N,g)$ that $\rk(N^{K-j}\Con{j}(N,g)) = j$ for all $1 \leq j \leq k-1$. Analogous considerations apply to the observability operator. This shows the claim on removing the rank constraint. 
\end{proof}
Combining \cref{prop:repeated,thm:cont_k_pos,lem:var_dim_ss} yields then the following
characterizations of internal Hankel $k$-positivity.
\begin{thm}\label{thm:Hankel_kpos_equiv}
    $(A,b,c)$ is internally Hankel $\tp{k}$ if and only if the realizations of the first $k$ compound systems of $(A,b,c)$ in \cref{eq:comp_real} are (simultaneously) internally positive. 
\end{thm}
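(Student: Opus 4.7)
My plan is to establish each direction by combining three ingredients from the preliminaries: Proposition \ref{prop:internal_pos} (which characterizes internal positivity of a realization as entrywise nonnegativity of its triple), Lemma \ref{lem:var_dim_ss} (reducing $\ovd{k-1}$ of $\Con{}(A,b)$ and $\Obs{}(A,c)$ to $k$-positivity of all truncations $\Con{t}(A,b)$, $\Obs{t}(A,c)$ for $t\ge k$), and the newly established Theorem \ref{thm:cont_k_pos} (the finite-dimensional sufficient condition, subject to a rank hypothesis). The pivotal observation is that, by Proposition \ref{prop:internal_pos}, internal positivity of the $j$-th compound realization in \cref{eq:comp_real} is equivalent to the three entrywise inequalities $\compound{A}{j}\ge 0$, $\compound{\Con{j}(A,b)}{j}\ge 0$ and $\compound{\Obs{j}(A,c)}{j}\ge 0$.

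For the forward direction, I assume $(A,b,c)$ is internally Hankel $k$-positive. By Proposition \ref{prop:k_pos_mat_var}, $A$ being $\ovd{k-1}$ gives $\compound{A}{j}\ge 0$ for $1\le j\le k$; by Lemma \ref{lem:var_dim_ss}, $\Con{k}(A,b)$ is $k$-positive. Since $\Con{j}(A,b)$ consists of the first $j$ columns of $\Con{k}(A,b)$, every entry of $\compound{\Con{j}(A,b)}{j}$ is a $j$-minor of $\Con{k}(A,b)$, hence nonnegative; the analogous statement for $\compound{\Obs{j}(A,c)}{j}$ follows by duality. Applying Proposition \ref{prop:internal_pos} yields internal positivity of every compound realization for $1 \le j \le k$.

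For the converse, simultaneous internal positivity of the compound realizations yields, via Proposition \ref{prop:internal_pos}, the inequalities $\compound{A}{j}\ge 0$, $\compound{\Con{j}(A,b)}{j}\ge 0$ and $\compound{\Obs{j}(A,c)}{j}\ge 0$ for $1\le j\le k$. The first of these is exactly $k$-positivity of $A$, while the remaining two are precisely the standing hypotheses of Theorem \ref{thm:cont_k_pos} for the controllability and observability operators. To discharge the rank constraint required by that theorem, I would argue that internal positivity of each compound realization implies external positivity (set $x(0)=0$), so every $G_{[j]}(z)$ has a nonnegative impulse response for $1\le j\le k$; Proposition \ref{prop:con_minor} then certifies that $G(z)$ is Hankel $k$-positive, and Proposition \ref{prop:repeated} supplies $p_{k-1}>0$ whenever $k$ does not exceed the order of the system. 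Invoking Theorem \ref{thm:cont_k_pos} now yields $k$-positivity of $\Con{t}(A,b)$ and $\Obs{t}(A,c)$ for all $t\ge k$, from which Lemma \ref{lem:var_dim_ss} concludes that $\Con{}(A,b)$ and $\Obs{}(A,c)$ are $\ovd{k-1}$, completing the proof.

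The main obstacle I expect is precisely the rank condition in Theorem \ref{thm:cont_k_pos}, which is not directly visible from the compound-matrix inequalities and must be threaded through the external side of the system via Propositions \ref{prop:con_minor} and \ref{prop:repeated}. The degenerate case where $k$ exceeds the system order should be essentially harmless, since the higher compound impulse responses vanish identically, but it may warrant an explicit remark to avoid invoking Theorem \ref{thm:cont_k_pos} outside of its stated regime.
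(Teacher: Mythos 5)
Your proposal is correct and follows essentially the same route as the paper, whose proof is simply the stated combination of \cref{lem:var_dim_ss}, \cref{thm:cont_k_pos} and \cref{prop:repeated} (with \cref{prop:internal_pos} giving the entrywise characterization of internal positivity of the compound realizations); your use of \cref{prop:con_minor} and \cref{prop:repeated} to supply $p_{k-1}>0$ and thereby discharge the rank hypothesis of \cref{thm:cont_k_pos} is exactly the role those results play in the paper's argument. The degenerate regime $k$ exceeding the system order that you flag is likewise left implicit by the paper (cf.\ the remark that $g_{[j]}=0$ for $j>n$), so no gap relative to the paper's own treatment.
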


\subsection{Internally Hankel $k$-positive realizations}
\label{sec:internally_kpos_discrete}
To approach the question of the existence of (minimal) Hankel $k$-positive realizations, we turn to an invariant cone approach, which has proven to be useful in dealing with the case $k=1$ \cite{ohta1984reachability,benvenuti2004tutorial}. The following is a classical
result.
\begin{prop}
	\label{prop:ext_int_pos} For $G(z)$, the following are equivalent:
	\begin{enumerate}
		\item $G(z)$ is externally positive with minimal realization $(A,b,c)$
		\item There exists an $A$-invariant proper convex cone $\mathcal{K}$ such that $b \in \mathcal{K}$ and $c^\transp \in \mathcal{K}^\ast$.
	\end{enumerate}
In particular, $G(z)$ has an internally positive realization if and only if $\mathcal{K}$ can be chosen to be polyhedral. 
\end{prop}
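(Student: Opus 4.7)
The natural candidate is the reachable cone $\mathcal{K} := \cl(\cone\{A^{i}b : i \geq 0\})$. For the easy direction $(2) \Rightarrow (1)$, I would simply note that if such a $\mathcal{K}$ exists, then $A$-invariance and $b \in \mathcal{K}$ together imply $A^{t-1}b \in \mathcal{K}$ for every $t \geq 1$, so $g(t) = cA^{t-1}b \geq 0$ follows immediately from $c^\transp \in \mathcal{K}^\ast$; hence $G(z)$ is externally positive and any of its minimal realizations serves as the realization in (1).

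For $(1) \Rightarrow (2)$, I would take $\mathcal{K}$ as above, so that closedness, convexity, $A$-invariance (by continuity of $A$), and $b = A^{0}b \in \mathcal{K}$ are automatic. External positivity yields $cA^{i}b = g(i+1) \geq 0$ for all $i \geq 0$, hence $c^\transp \in \mathcal{K}^\ast$. To see that $\mathcal{K}$ is a proper cone, nonempty interior follows from controllability of the minimal realization, since $\{b, Ab, \dotsc, A^{n-1}b\}$ already spans $\mathds{R}^n$; and pointedness is obtained by noting that if $v, -v \in \mathcal{K}$, then $A^{i}v, -A^{i}v \in \mathcal{K}$ by $A$-invariance, giving $cA^{i}v = 0$ for all $i \geq 0$, which together with observability of the minimal realization forces $v = 0$.

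For the ``in particular'' statement, I would argue both directions by change of generators. If there exists a polyhedral $\mathcal{K} = \cone(P)$ with $P \in \mathds{R}^{n \times k}$, then by $A$-invariance and $b \in \mathcal{K}$ one obtains $Q \in \mathds{R}^{k \times k}_{\geq 0}$ and $\tilde b \in \mathds{R}^{k}_{\geq 0}$ with $AP = PQ$ and $b = P\tilde b$, while $\tilde c := cP \geq 0$ follows from $c^\transp \in \mathcal{K}^\ast$. The identity $A^{t-1}P = PQ^{t-1}$ then shows that the nonnegative triple $(Q, \tilde b, \tilde c)$ realizes the same impulse response and is internally positive by \cref{prop:internal_pos}. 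Conversely, any internally positive realization $(A',b',c')$ of $G(z)$ can be projected onto its controllable-observable quotient in the standard Kalman way; the image of the nonnegative orthant under this projection is then a polyhedral $A$-invariant cone in the minimal state space with the required properties.

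The main technical obstacle is verifying properness---especially pointedness---of the reachable cone in the $(1) \Rightarrow (2)$ direction, which is where both ingredients of minimality enter: controllability gives nonempty interior, while observability applied to $v$ and $-v$ rules out any line in $\mathcal{K}$. The remaining arguments are direct manipulations of the cone/dual-cone formalism and the standard translation between polyhedral cone generators and nonnegative state-space data.
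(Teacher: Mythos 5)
The paper itself does not prove this proposition---it is stated as a classical result (cf.\ the cited positive-realization literature)---so your argument can only be judged on its own merits. Most of it is sound and follows the classical route: the reachability cone $\cl(\cone\{A^i b: i\ge 0\})$ with controllability giving solidity and observability (applied to $\pm v$ via $A$-invariance and $c^\transp\in\mathcal{K}^\ast$) giving pointedness is exactly the standard argument for $(1)\Rightarrow(2)$, the direction $(2)\Rightarrow(1)$ is immediate, and the passage from a polyhedral invariant cone $\cone(P)$ to a nonnegative triple $(Q,\tilde b,\tilde c)$ with $AP=PQ$, $b=P\tilde b$, $\tilde c=cP$ is correct.

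The genuine gap is the last sentence: ``the image of the nonnegative orthant under the Kalman projection onto the controllable--observable quotient'' is not a valid construction as stated. The Kalman reduction to the minimal system is a \emph{restriction to the reachable subspace followed by a quotient}, not a linear map defined on the whole state space $\mathds{R}^N$ of the positive realization; in general there is \emph{no} $T\in\mathds{R}^{n\times N}$ with $Tb'=b$ and $AT=TA'$ (take $A'=\left(\begin{smallmatrix}0&1\\0&0\end{smallmatrix}\right)$, $b'=e_1$, $c'=e_1^\transp$, whose minimal realization is $(0,1,1)$: the intertwining forces $0=TA'=(0,t_1)$ while $Tb'=t_1=1$). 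If instead you mean the coordinate projection onto the ``co'' block in Kalman coordinates applied to the whole orthant, then both the $A$-invariance of the image and the condition $c^\transp\in\mathcal{K}^\ast$ acquire uncontrolled cross terms coming from the uncontrollable-but-observable block (the co-row of the Kalman form is $[\,0\;A_{22}\;0\;A_{24}\,]$ and $\tilde c=(0\;c_2\;0\;c_4)$), so neither property is justified. The correct construction takes $\mathcal{K}$ to be the image of $\mathds{R}^N_{\ge 0}\cap R'$, where $R'$ is the reachable subspace of $(A',b')$, under the map $A'^i b'\mapsto A^i b$ (well defined and linear on $R'$ by observability of the minimal realization, and intertwining there): this set is polyhedral (linear image of the intersection of the orthant with a subspace), $A$-invariant because $R'\cap\mathds{R}^N_{\ge 0}$ is invariant under the nonnegative matrix $A'$, contains $b$, satisfies $cx\ge 0$ on it since $c\,\Phi w=c'w\ge0$, and is proper by the same controllability/observability argument you already used. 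With that replacement your proof is complete.
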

Several algorithms for finding such an invariant polyhedral cone can be found,
e.g., in \cite{farina1996existence,farina2011positive}. Internal positivity
is, therefore, a finite-dimensional means to verify external positivity. However, since not every externally positive system admits an internally positive 
realization \cite{ohta1984reachability,Farina2000}, we cannot expect 
that all externally positive compound systems have internally positive 
realizations, and, as a consequence, internal Hankel $k$-positivity does
not follow from its external counterpart. For Hankel \emph{total}
positivity, however, the two notions are equivalent.
\begin{prop}\label{prop:Hankel_total_real}
	$G(z)$ is Hankel totally positive if and only if  there exists a minimal realization that is internally Hankel totally positive. 
	\end{prop}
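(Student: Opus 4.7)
The plan is as follows. The backward direction is immediate from the factorization \cref{eq:Hankel_fac}: if $(A,b,c)$ is internally Hankel totally positive then by definition $A$, $\Con{}(A,b)$ and $\Obs{}(A,c)$ are $\ovd{k-1}$ for every $k\geq 1$, and applying \cref{lem:compound_mat}(i) to each truncation $H_g(1,j)=\Obs{j}(A,c)\Con{j}(A,b)$ shows that every minor of $H_g(1,j)$ is nonnegative. By \cref{lem:var_dim_op}, $G(z)$ is therefore Hankel totally positive.

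For the forward direction, \cref{prop:repeated} yields that all $n$ poles of $G(z)$ are simple and strictly positive, so after reindexing we may write $G(z)=\sum_{i=1}^n r_i/(z-p_i)$ with $0<p_1<\dots<p_n$. The candidate minimal realization is the diagonal $A=\diag(p_1,\dots,p_n)$ with $b_i=c_i=\sqrt{r_i}$; this is minimal because $A$ has distinct eigenvalues and no $b_i$ or $c_i$ vanishes. The matrix $A$ is diagonal with positive entries and hence totally nonnegative, while $\Con{j}(A,b)=\diag(b)\,W_j$, where $W_j\in\mathds{R}^{n\times j}$ is the Vandermonde matrix with strictly increasing positive nodes $p_i$, a classically strictly totally positive matrix. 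By \cref{lem:compound_mat}(i), $\compound{\Con{j}(A,b)}{j}$ is strictly positive for every $j$, and the rank hypothesis of \cref{thm:cont_k_pos} holds because $A$ is invertible and $(A,b)$ is controllable. Invoking \cref{thm:cont_k_pos} together with \cref{lem:var_dim_ss} gives that $\Con{}(A,b)$ is $\ovd{k-1}$ for every $k$; the argument for $\Obs{}(A,c)$ is dual. Combined with the total nonnegativity of $A$ this establishes internal Hankel total positivity; equivalently, all three matrices of every compound realization in \cref{eq:comp_real} are nonnegative, so \cref{thm:Hankel_kpos_equiv} applies.

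The main obstacle is proving $r_i>0$, which is what permits the above choice of real positive $b_i$ and $c_i$. To this end, observe that $H_g(1,n)$ is a finite submatrix of the matrix representation of $\Hank{g}$ and is therefore totally nonnegative by assumption. Since $H_g(1,n)$ is symmetric and a symmetric matrix is positive semidefinite exactly when every one of its principal minors is nonnegative, we conclude $H_g(1,n)\succeq 0$. Writing $H_g(1,n)=V_n\,\diag(r_1,\dots,r_n)\,V_n^\transp$, with $V_n$ the nonsingular Vandermonde in the distinct positive nodes $p_i$, positive semidefiniteness together with the invertibility of $V_n$ forces $\diag(r_1,\dots,r_n)\succeq 0$, i.e., $r_i\geq 0$. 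Since the poles and zeros of $G(z)$ are assumed disjoint no residue can vanish, so $r_i>0$ for every $i$, completing the construction.
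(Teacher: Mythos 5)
Your route for the forward direction is essentially the paper's: invoke \cref{prop:repeated}, build the symmetric diagonal realization $A=\diag(p_1,\dots,p_n)$, $b=c^\transp$ with $b_i=\sqrt{r_i}$, verify total nonnegativity of $A$ and of the Vandermonde-structured controllability/observability matrices, and conclude via \cref{thm:cont_k_pos}, \cref{lem:var_dim_ss} and \cref{thm:Hankel_kpos_equiv} (the paper handles the Vandermonde step by citing a lemma of \cite{grussler2020variation} where you use strict total positivity of Vandermonde matrices). Your explicit derivation of $r_i>0$ from $H_g(1,n)\succeq 0$ via symmetry and nonnegativity of all principal minors is a welcome supplement, since \cref{prop:repeated} as stated only constrains the poles; the converse direction is likewise fine (it is the observation, made in the paper right after \cref{def:int_Hankel}, that \cref{eq:Hankel_fac} makes every internally Hankel $\tp{k}$ system Hankel $\tp{k}$, with your implicit passage from the $\ovd{}$ operators to $\tp{}$ truncations being exactly \cref{lem:var_dim_ss}).

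There is, however, a gap in the forward direction: you assert that all poles are simple and \emph{strictly} positive, whereas \cref{prop:repeated} only yields $p_i\geq 0$ with simplicity, so the smallest pole may vanish --- e.g.\ $G(z)=1/z$, or $1/z+1/(z-0.5)$, are Hankel totally positive with a pole at the origin, and the proposition must cover them. For such systems two of your justifications fail as written: the Vandermonde $W_j$ with a zero node is totally nonnegative but not strictly totally positive, and $A$ is singular, so the rank hypothesis of \cref{thm:cont_k_pos} cannot be dispatched by invertibility of $A$. The repair is routine and brings you back to the paper's argument: $\diag(b)W_j$ is still totally nonnegative (by continuity in the nodes, or by the lemma the paper cites), and the rank constraints are only required for $j\leq n-1$ and are guaranteed by the last sentence of \cref{thm:cont_k_pos}, since the poles are distinct and hence $p_{n-1}>0$ even when $p_n=0$. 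With that adjustment your proof is correct and matches the paper's.
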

\begin{proof}
By \cref{prop:repeated}, it holds that $G(z) = \sum_{i=1}^{n}\frac{r_i}{z-p_i}$ with $p_i \geq 0$ and $r_i > 0$. This admits a realization $A = \diag(p_n,\dots,p_1)$ and $b = c^\transp$ with $b_i = \sqrt{r_{n-i+1}}$, $1 \leq i \leq n$. Thus, $A$ is totally positive, and by applying \cite[Lemma~22]{grussler2020variation} to the sub-matrices of $\Con{j}(A,b)$, also $\Con{j}(A,b) =  \Obs{j}(A,c)^\transp$ is totally positive for all $j \geq 1$. Thus, the result follows by \cref{thm:Hankel_kpos_equiv}. 
\end{proof}

To bridge the gap between external and internal Hankel $k$-positivity, we address the existence of minimal internal realizations. 
\begin{thm}
	\label{thm:real_simple}
	$G(z)$ with minimal realization $(A,b,c)$ has a minimal internally Hankel $k$-positive realization if and only if there exists a $P \in \mathds{R}^{n \times n}$ with $\rk(P) = n$ such that for all $1 \leq j \leq k$
        \begin{subequations}
		    \begin{align}
		    AP = PN, \text{$k$-positive $N$}  \label{eq:A_cond}\\
 		    \compound{\Con{j}(A,b)}{j} \in \cone(\compound{P}{j}), \label{eq:b_cond}\\
		    \compound{\Obs{j}(A,c)}{j}^\transp \in  \cone(\compound{P}{j})^\ast.     
		    \label{eq:c_cond}
		    \end{align}
	    \end{subequations}

	\end{thm}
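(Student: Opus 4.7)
The plan is to exploit the fact that both $(A,b,c)$ and any candidate realization $(N,\tilde b,\tilde c)$ are minimal of the same order $n$, so they are related by a state-space similarity $P\in\mathbb{R}^{n\times n}$ via $N=P^{-1}AP$, $\tilde b=P^{-1}b$, $\tilde c=cP$. The three conditions in the theorem then translate the internal Hankel $k$-positivity of $(N,\tilde b,\tilde c)$ back into conditions on $P$ and $(A,b,c)$, using Cauchy--Binet (\cref{lem:compound_mat}(i)) applied to the similarity identities
\[
    \Con{j}(A,b)=P\,\Con{j}(N,\tilde b),\qquad \Obs{j}(N,\tilde c)=\Obs{j}(A,c)\,P.
\]

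For the forward direction, I would start from a minimal internally Hankel $k$-positive realization $(N,\tilde b,\tilde c)$. By \cref{thm:Hankel_kpos_equiv}, this gives $\compound{N}{j}\ge 0$, $\compound{\Con{j}(N,\tilde b)}{j}\ge 0$, and $\compound{\Obs{j}(N,\tilde c)}{j}\ge 0$ for $1\le j\le k$. Pushing these through the identities above yields $\compound{\Con{j}(A,b)}{j}=\compound{P}{j}\compound{\Con{j}(N,\tilde b)}{j}\in\cone(\compound{P}{j})$ and $\compound{\Obs{j}(A,c)}{j}\,\compound{P}{j}\ge 0$. Since the dual of $\cone(M)$ is $\{y:M^\transp y\ge 0\}$, the latter is precisely \cref{eq:c_cond}, while \cref{eq:A_cond} holds by construction.

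For the converse, I would set $(N,\tilde b,\tilde c):=(P^{-1}AP,P^{-1}b,cP)$, which is automatically a minimal realization of $G(z)$. Condition \cref{eq:A_cond} gives the $k$-positivity of $N$; premultiplying \cref{eq:b_cond} by $\compound{P}{j}^{-1}=\compound{P^{-1}}{j}$ (again by Cauchy--Binet) gives $\compound{\Con{j}(N,\tilde b)}{j}\ge 0$; and \cref{eq:c_cond} unpacks to $\compound{\Obs{j}(N,\tilde c)}{j}\ge 0$. By \cref{lem:var_dim_ss}, establishing internal Hankel $k$-positivity then reduces to showing that $\Con{t}(N,\tilde b)$ and $\Obs{t}(N,\tilde c)$ are $k$-positive for every $t\ge k$, which is precisely the content of \cref{thm:cont_k_pos} applied to $(N,\tilde b,\tilde c)$.

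The main obstacle is verifying the rank hypotheses of \cref{thm:cont_k_pos}, namely $\rk(N^{n-j}\Con{j}(N,\tilde b))=j$ for $1\le j\le k-1$ and its dual, since these are not assumed directly. To overcome this I would apply Cauchy--Binet a second time to the Hankel factorization \cref{eq:hankel_mat_decomp}, giving
\[
    g_{[j]}(t)=\compound{\Obs{j}(N,\tilde c)}{j}\,\compound{N}{j}^{\,t-1}\,\compound{\Con{j}(N,\tilde b)}{j}\ge 0,
\]
so each compound system $G_{[j]}(z)$ is externally positive. By \cref{prop:con_minor}, this forces $G(z)$ to be Hankel $k$-positive, and \cref{prop:repeated} then delivers $p_{k-1}>0$ (for $k\le n$). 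The final clause of \cref{thm:cont_k_pos} supplies the needed rank conditions, closing the argument.
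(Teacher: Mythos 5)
Your proof is correct and follows essentially the same route as the paper: both directions rest on the similarity of minimal realizations, Cauchy--Binet (\cref{lem:compound_mat}), and the compound-system characterization of \cref{thm:Hankel_kpos_equiv}. The only difference is that in the converse you unroll \cref{thm:Hankel_kpos_equiv} into its ingredients (\cref{lem:var_dim_ss}, \cref{thm:cont_k_pos}, \cref{prop:con_minor}, \cref{prop:repeated}) to discharge the rank hypotheses, which is exactly how the paper itself justifies that theorem, so the paper simply cites it directly.
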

\begin{proof} $\Rightarrow$: Let $(A_+,b_+,c_+)$ be a minimal internally Hankel internally $k$-positive realization. By the similarity of minimal realizations there exists an invertible $P \in \Rnn$ such that for $1 \leq j \leq k$
\begin{align*}
		AP = PN \text{ with $k$-positive $N = A_+$} \\
 		\compound{\Con{j}(A,b)}{j} = \compound{P}{j} \compound{\mathcal{C}^j(A_+,b_+)}{j}\\
		\compound{\Obs{j}(A,c)}{j} \compound{P}{j} = \compound{\mathcal{O}^j(A_+,c_+)}{j},
		\end{align*}
which by \cref{thm:Hankel_kpos_equiv} shows the claim.

$\Leftarrow$: If \cref{eq:A_cond,eq:b_cond,eq:c_cond} hold, then there exists a minimal internally positive realization $(N,g,h)$ with nonnegative 
\begin{align*}
    \compound{\Con{j}(N,g)}{j} &= \compound{P}{j}^{-1}\compound{\Con{j}(A,b)}{j}\\
    \compound{\Obs{j}(N,h)}{j} &= \compound{\Obs{j}(A,c)}{j} \compound{P}{j}
\end{align*}
for $1 \leq j \leq k$ and $k$-positive $N$. Thus, by \cref{thm:Hankel_kpos_equiv}, $(N,g,h)$ is internally Hankel $k$-positive.

\end{proof}

\begin{rem}\label{rem:thm_realization}
From \cref{prop:ext_int_pos}, we know that in case of $k=1$, \cref{thm:real_simple} remains true even if we drop minimality, i.e., $P \in \mathds{R}^{K \times K}$ with $K \geq n$. The reason for this lies in the fact that there always exists a controllable, internally positive $(A_+,b_+,c_+)$ \cite{Farina2000,ohta1984reachability}. To be able to conclude the same for $k >1$, we would need to show that \cref{eq:A_cond,eq:b_cond} are sufficient for the existence of $b_+$ with $b = Pb_+$ and $\compound{\Con{j}(A_+,b_+)}{j}\geq 0$ for $1\leq j \leq k$. Together with \cref{cor:impulse}, it is possible to show then that \cref{thm:real_simple} extends to non-minimal internally Hankel $k$-positive realizations, i.e., $P \in \mathds{R}^{n \times K}$ with $K > n$. 
\end{rem}
Finally, under an irreducibility condition, all autonomous $k$-positive systems give rise to an internally Hankel $k$-positive system.{
\begin{prop}\label{prop:irred_A_b}
	Let $A\in\mathbb{R}^{n \times n}$ be $k$-positive with irreducible $\compound{A}{j}$, $1\leq j \leq k$. Then there exists a $b \in \mathds{R}^n$ such that
	$\compound{\Con{j}(A,b)}{j} > 0$ for all $1\leq j \leq k$ and $(A,b)$ is controllable.
\end{prop}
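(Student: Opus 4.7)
The plan is to construct $b$ in a basis adapted to the dominant spectral decomposition of $A$ and certify strict positivity through a dominant-term analysis as a small parameter $\epsilon\to 0^+$. By \cref{cor:EW_compound}, the hypotheses yield simple real eigenvalues $\lambda_1 > \lambda_2 > \cdots > \lambda_k > 0$ of $A$ with associated eigenvectors $\xi_1,\ldots,\xi_k$ such that each $\compound{[\xi_1\,|\,\cdots\,|\,\xi_j]}{j}$ is entrywise strictly positive for $1\le j\le k$. I would complete these to an $A$-invariant decomposition, writing $A = P\tilde A P^{-1}$ with $P=[\xi_1\,|\,\cdots\,|\,\xi_k\,|\,X]$ and $\tilde A = \diag(\lambda_1,\ldots,\lambda_k,B)$, where $B$ is the restriction of $A$ to a complementary invariant subspace (chosen cyclic, so that some $v\in\mathbb{R}^{n-k}$ makes $(B,v)$ controllable).

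The candidate is $b = P\tilde b_{\epsilon,\delta}$, where $\tilde b_{\epsilon,\delta}$ has first-block entries $(-1)^{m-1}\epsilon^{m-1}$ for $m=1,\ldots,k$ and second-block entry $\delta v$. By \cref{lem:compound_mat}(i), $\compound{\Con{j}(A,b)}{j} = \compound{P}{j}\,\compound{\Con{j}(\tilde A,\tilde b_{\epsilon,\delta})}{j}$. A direct Vandermonde computation, factoring $(-1)^{i_m-1}\epsilon^{i_m-1}$ out of the $m$-th row, shows that for $I=\{i_1<\cdots<i_j\}\subseteq\{1,\ldots,k\}$ the $I$-th entry of $\compound{\Con{j}(\tilde A,\tilde b_{\epsilon,\delta})}{j}$ equals
\[
(-1)^{\binom{j}{2}+\sum_{m=1}^{j}(i_m-1)}\,\epsilon^{\sum_{m=1}^{j}(i_m-1)}\,\prod_{s<t}(\lambda_{i_s}-\lambda_{i_t}),
\]
while any entry whose row-index $I$ intersects $\{k+1,\ldots,n\}$ is $O(\delta)$.

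The exponent $\sum_m(i_m-1)$ attains its unique minimum $\binom{j}{2}$ over $\mathcal{I}_{k,j}$ at $I=\{1,\ldots,j\}$, where the total sign becomes $(-1)^{2\binom{j}{2}}=+1$; hence the dominant contribution is the strictly positive scalar $\epsilon^{\binom{j}{2}}\prod_{s<t}(\lambda_s-\lambda_t)$. Multiplying by $\compound{P}{j}$, the $\epsilon$-leading term of $\compound{\Con{j}(A,b)}{j}$ is
\[
\epsilon^{\binom{j}{2}}\prod_{s<t}(\lambda_s-\lambda_t)\cdot\compound{[\xi_1\,|\,\cdots\,|\,\xi_j]}{j},
\]
which is entrywise strictly positive by \cref{cor:EW_compound}(iii). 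Since these amount to finitely many open conditions (one per $j\le k$ and per row of the compound), fixing $\epsilon>0$ small and then $\delta = o(\epsilon^{\binom{k}{2}})$ will yield $\compound{\Con{j}(A,b)}{j}>0$ simultaneously for all $1\le j\le k$.

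For controllability, applying \cref{cor:EW_compound} to $\compound{A}{k}$ additionally gives $\lambda_k > |\lambda_{k+1}|$, so the dominant eigenvalues $\lambda_1,\ldots,\lambda_k$ are strictly separated from the spectrum of $B$. Combined with the nonvanishing first-block entries of $\tilde b_{\epsilon,\delta}$ and the controllability of $(B,v)$, a PBH test then gives controllability of $(\tilde A,\tilde b_{\epsilon,\delta})$, hence of $(A,b)$. The main technical obstacle is the sign bookkeeping in the Vandermonde analysis: the alternating signs $(-1)^{m-1}$ in the first block of $\tilde b_{\epsilon,\delta}$ are engineered precisely to cancel the $(-1)^{\binom{j}{2}}$ coming from the Vandermonde determinant, so that the leading term at $I=\{1,\ldots,j\}$ is positive uniformly in $j$ and couples, through $\compound{P}{j}$, with the Perron-type strictly positive cone supplied by \cref{cor:EW_compound}(iii).
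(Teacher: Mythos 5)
Your construction is correct and, at its core, follows the same strategy as the paper: build $b$ from the leading eigenvectors $\xi_1,\dots,\xi_k$ supplied by \cref{cor:EW_compound}, factor $\Con{j}(A,b)$ through a Vandermonde matrix via \cref{lem:compound_mat}, and grade the coefficients so that the strictly positive column $\compound{[\xi_1\,\cdots\,\xi_j]}{j}$ from \cref{cor:EW_compound}(iii) dominates every $j$-compound; controllability is then restored separately. The differences are in execution, and they cut both ways. First, the paper takes all-positive decreasing weights $\alpha_1\ge\dots\ge\alpha_k>0$ and asserts that $\compound{V^j}{j}$ is a positive vector, whereas your alternating-sign grading $(-1)^{m-1}\epsilon^{m-1}$ does the sign bookkeeping explicitly: since the eigenvalues are listed in \emph{decreasing} order, the $j$-minors of the Vandermonde factor carry the sign $(-1)^{\binom{j}{2}}$ rather than being positive (a $2\times 2$ symmetric TP example already shows that with both coefficients positive and $\det[\xi_1\;\xi_2]>0$ one gets $\det[b\;Ab]=\alpha_1\alpha_2(\lambda_2-\lambda_1)\det[\xi_1\;\xi_2]<0$), so your cancellation of $(-1)^{\binom{j}{2}}$ by the alternating signs is exactly the right fix and is the more careful version of this step. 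Second, for controllability the paper perturbs $b$ and appeals to openness of the strict inequalities, while you add an explicit $\delta v$-component in a complementary invariant subspace and run a PBH test; both arguments implicitly assume the needed cyclicity (your ``$B$ chosen cyclic'' is not something you get to choose -- $B$ is determined by $A$ -- just as the paper's perturbation tacitly assumes controllable $b$'s are dense), so neither is worse off here. One small inaccuracy on your side: irreducibility of $\compound{A}{k}$ does not give the strict gap $\lambda_k>|\lambda_{k+1}|$ (only $\lambda_k\ge|\lambda_{k+1}|$, plus $\lambda_{k+1}\neq\lambda_k$ from simplicity of the Perron root of $\compound{A}{k}$); fortunately your PBH argument never needs the modulus gap, only that $\lambda_1,\dots,\lambda_k$ are simple eigenvalues of $A$ and hence not eigenvalues of $B$, which does follow. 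With that claim weakened accordingly, your proof stands and is, on the positivity part, slightly tighter than the paper's.
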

	\begin{proof}
By \cref{cor:EW_compound}, $\lambda_1(A)>\dotsc> \lambda_k(A) > 0$. Let $\xi_1,\dots,\xi_k$ denote 
the associated eigenvectors. Our goal is to show that there exists $\alpha \in \mathds{R}^k$ with
$\alpha_1 \geq \dots \geq \alpha_k > 0$ such that $b = \sum_{j=1}^k \alpha_j \xi_j$ fulfills the first
part of the claim. Then, by continuity of the determinant there also exists such a $b$ with 
$(A,b)$ controllable. 
	
We begin by writing
	\[
		\Con{j}(A,b) = \begin{pmatrix}
		\alpha_1\xi_1 & \dots & \alpha_k \xi_k
		\end{pmatrix}
		V^j,
	\]
	where $V^j$ is the Vandermonde matrix
	\[
	V^j = 
	\left(
			\begin{array}{cccc}
				1 & \lambda_1(A) & \dotsc & \lambda_1(A)^{j-1} \\
				\vdots &\vdots & & \vdots \\
				1 & \lambda_k(A) & \dotsc & \lambda_k(A)^{j-1} 
			\end{array}
		\right),
	\]
	so that \cref{lem:compound_mat} implies
		\begin{align*}
	\label{eq:existence_b} 
	\compound{\Con{j}(A,b)}{j} &= \compound{\begin{pmatrix}
		\alpha_1\xi_1 & \dots & \alpha_k \xi_k
		\end{pmatrix}}{j} \compound{V^j}{j}.
	\end{align*}
	Since $\compound{V^j}{j}$ is a positive vector \cite[Example 0.1.4]{fallat2011}, we can absorb its contribution into the
	choice of $\alpha$ and assume without loss of generality that  
		\begin{align*}
	\compound{\Con{j}(A,b)}{j} = \compound{\begin{pmatrix}
		\xi_1 & \dots & \xi_k
		\end{pmatrix}}{j} \compound{\mathrm{diag}(\alpha_1,\dotsc,\alpha_k)}{j} e
	\end{align*}
	where $e$ is the vector of all ones. Thus, $\compound{\Con{j}(A,b)}{j}$ is a linear combination of the columns in $\compound{\begin{pmatrix}
		\xi_1 & \dots & \xi_k
		\end{pmatrix}}{j}$, where each column is multiplied by the diagonal entry in $\compound{\mathrm{diag}(\alpha_1,\dotsc,\alpha_k)}{j}$. In particular, the first column $\compound{\begin{pmatrix}
		\xi_1 & \dots & \xi_k
		\end{pmatrix}}{j}$ is 
	 positive by \cref{cor:EW_compound} and multiplied by the largest factor $\prod_{i=1}^{j} \alpha_i$. Therefore, by choosing inductively sufficiently large $\alpha_1 \geq \dots \geq \alpha_k > 0$, the entries in $\compound{\Con{j}(A,b)}{j}$ are dominated by $\prod_{i=1}^{j} \alpha_i \compound{\begin{pmatrix}
		\xi_1 & \dots & \xi_k
		\end{pmatrix}}{j}$, proving their 
positivity for $1\leq j \leq k$. 
\end{proof}}
An example why the irreducibility in \cref{prop:irred_A_b} cannot in general be dropped is given in \Cref{sec:example}. %

\section{Extensions}
\label{sec:cont}
In this section, we first discuss extensions of our results in discrete-time (DT) to continuous-time (CT)
systems, followed by applications to step-response analysis.

\subsection{Continuous-Time Systems}
The tuple $(A,b,c)$ is a CT state-space realization if 
\begin{equation}\label{eq:SISO_ct}
\begin{aligned}
\dot{x}(t) &= Ax(t) + bu(t), \\
y(t) &= cx(t), 
\end{aligned} 
\end{equation}
with $A \in \mathbb{R}^{n\times n}$, $b, c^\transp \in \mathbb{R}^n$. Its impulse response is
\begin{equation} 
    g(t) = c e^{A t} b s(t)
\end{equation}
and the controllability and observability operators are given by
\begin{subequations}
    \label{eq:CT_operators}
	\begin{align}
	\label{eq:CT_controllability}
	\Conct{}(A,b) u &:= \int_{-\infty}^{0_-} e^{-A\tau}b u(\tau) d\tau\\
	(\Obsct{}(A,c) x_0)(t) &:= ce^{A t} x_0, \ 
	x_0 \in \mathds{R}^n,  
	\ t\geq 0.
	\end{align}
\end{subequations}
As for DT systems, we assume $g$ to be absolutely integrable and $u$ to be bounded.  
By defining the variation of a continuous-time signal $u: \mathds{R} \to \mathds{R}$ as
\begin{equation*}
    \label{eq:CT_var}
    \varict{u} := \sup_{\substack{n \in \mathds{Z}_{>0} \\ t_1<\dotsc<t_n}} \; \vari{[u(t_1),\dotsc,u(t_n)]}
\end{equation*}
we can define CT internal Hankel $k$-positivity as follows.
\begin{defn}
    \label{def:CT_internal_hankel_ovd_k}
    A CT system $(A,b,c)$ is called \textit{CT internally Hankel $\tp{k}$} if $e^{At}$, $\Conct{}(A,b)$, and $\Obsct{}(A,c)$ are 
    $\ovd{k-1}$ for all $t \geq 0$.
\end{defn}
As in DT, we seek to characterize these systems through finite-dimensional $k$-positive constraints. We will do so by discretization
of \cref{eq:CT_operators}, which allows us to apply our DT results. To this end, consider for $h , j > 0$, 
the \emph{(Riemann sum) sampled controllability operator}
\begin{subequations}
\label{eq:discretized_ops}
\begin{equation}
    \label{eq:discretized_ctr}
    \begin{split}
    \Conct{j,h}(A,b) u &:= h 
    \sum_{i=-j}^{-1} e^{-A ih} b u(ih)  = 
    h e^{Ah} 
    \Con{j}(e^{Ah},b)
    \left(
    \begin{array}{c}
         u(-h)  \\
         u(-2h) \\
         \vdots \\
         u(-jh)
    \end{array}
    \right)
    \end{split}
\end{equation}
and the \emph{sampled observability operator}  
\begin{equation}
    \label{eq:discretized_obs}
    \begin{split}
    \left(
    \begin{array}{c}
         y(h)  \\
         y(2h) \\
         \vdots \\
         y(jh)
    \end{array}
    \right)
     &= \Obs{j}(e^{Ah},c) e^{Ah} x_0 =: \Obsct{j,h}(A,c) x_0.
    \end{split}
\end{equation}
\end{subequations}
Note that for each $u$ and $x_0$ with finite variation there exist sufficiently large $j$ and small $h$ such that $\varict{\Conct{}(A,b)u} = \vari{\Conct{j,h}(A,b) u}$ 
and $\varict{\Obsct{}(A,c) x_0} = \vari{\Obsct{j,h}(A,c) x_0}$. Thus, \cref{prop:k_pos_mat_var} allows connecting the $\ovd{k-1}$ property of the CT
operators \cref{eq:CT_operators} to $k$-positivity of the matrices $\Conct{j,h}(A,b)$ and 
$\Obsct{j,h}(A,c)$, where we consider all $j \ge k$. Since, by \cref{lem:compound_mat},
$k$-positivity of these matrices follows from $k$-positivity of $e^{Ah}$, $\Con{j}(e^{Ah},b)$, and $\Obs{j}(e^{Ah},c)$, we arrive at the following CT analogue of
\cref{lem:var_dim_ss}.
\begin{lem}
    \label{lem:internal_kpos_CT}
    A CT system $(A,b,c)$ is CT internally Hankel $\tp{k}$ if and only if there exists a $\varepsilon > 0$ such that the DT system $(e^{Ah}, b,c)$ is (DT) internally Hankel $\tp{k}$ for all $h \in (0,\varepsilon)$.
\end{lem}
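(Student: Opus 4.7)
The plan is to exploit the sampling factorizations \cref{eq:discretized_ops} together with \cref{lem:compound_mat,prop:k_pos_mat_var,lem:var_dim_ss} to transfer the $\ovd{k-1}$ properties between CT and DT.

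For sufficiency ($\Leftarrow$), suppose $(e^{Ah},b,c)$ is DT internally Hankel $\tp{k}$ for every $h\in(0,\varepsilon)$. By \cref{lem:var_dim_ss,prop:k_pos_mat_var}, for every such $h$ and every $j\ge k$, the matrices $e^{Ah}$, $\Con{j}(e^{Ah},b)$, and $\Obs{j}(e^{Ah},c)$ are $\tp{k}$. For any $t>0$, choosing $n\in\mathds{Z}_{>0}$ with $t/n<\varepsilon$ and applying \cref{lem:compound_mat} to $e^{At}=(e^{At/n})^n$ shows that $e^{At}$ is $\tp{k}$, hence $\ovd{k-1}$, for all $t\ge 0$. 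For CT controllability, any input $u$ of finite variation admits (per the discussion preceding the lemma) some $h\in(0,\varepsilon)$ and $j\ge k$ with $\varict{u}=\vari{(u(-h),\ldots,u(-jh))}$ and $\varict{\Conct{}(A,b)u}=\vari{\Conct{j,h}(A,b)u}$. Since $\Conct{j,h}(A,b)=h\,e^{Ah}\Con{j}(e^{Ah},b)$ is a product of $\tp{k}$ matrices, \cref{lem:compound_mat} makes it $\tp{k}$, hence $\ovd{k-1}$; this transfers to $\Conct{}(A,b)$. The observability case is strictly dual via $\Obsct{j,h}(A,c)=\Obs{j}(e^{Ah},c)\,e^{Ah}$.

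For necessity ($\Rightarrow$), the hypothesis that $e^{At}$ is $\ovd{k-1}$ for all $t\ge 0$ immediately gives $e^{Ah}$ is $\tp{k}$ for every $h>0$, so any $\varepsilon>0$ suffices for that component. For $\Obs{}(e^{Ah},c)$: given $x_0$ with $\vari{x_0}\le k-1$, the sequence $c(e^{Ah})^t x_0$ is a subsampling of the CT output $y(t)=ce^{At}x_0$, so its variation is bounded by $\varict{y}\le k-1$; since $y$ is real-analytic and therefore of constant sign on each maximal nonzero sub-interval up to its first sign change, the first-nonzero-sign of the subsampled sequence agrees with that of $y$, giving $\ovd{k-1}$. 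For $\Con{}(e^{Ah},b)$: given a finitely supported DT input $\tilde u=(\tilde u(-1),\ldots,\tilde u(-j))$ with $\vari{\tilde u}\le k-1$, consider the narrow-pulse CT input
\[u_\delta(\tau):=\tfrac{1}{\delta}\sum_{i=1}^{j}\tilde u(-i)\,\mathds{1}_{[-(i-1)h-\delta,\,-(i-1)h]}(\tau),\]
which for $0<\delta<h$ satisfies $\varict{u_\delta}=\vari{\tilde u}$ and $\Conct{}(A,b)u_\delta\to\Con{j}(e^{Ah},b)\tilde u$ in $\mathds{R}^n$ as $\delta\to 0$. CT $\ovd{k-1}$ of $\Conct{}(A,b)$ yields $\vari{\Conct{}(A,b)u_\delta}\le k-1$, and lower semi-continuity of the vector variation under pointwise convergence (with the first-sign condition passing to the limit analogously) transfers both to $\Con{j}(e^{Ah},b)\tilde u$; \cref{lem:var_dim_ss} then yields $\ovd{k-1}$ of $\Con{}(e^{Ah},b)$.

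The main obstacle lies in the necessity argument: one must verify carefully that the narrow-pulse and subsampling approximations preserve the first-nonzero-sign condition (not just the sign-change count) in the limit, and that degenerate cases such as vanishing entries in the limiting vector can be handled by small perturbations of $\tilde u$ or of the pulse locations. Once these continuity-of-variation technicalities are in hand, the structural content of the lemma rests entirely on the factorizations in \cref{eq:discretized_ops} combined with compound multiplicativity.
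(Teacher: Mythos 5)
Your proposal takes essentially the same route as the paper, which justifies this lemma only through the discussion immediately preceding it: the sampled operators \cref{eq:discretized_ops}, their factorizations through $e^{Ah}$, $\Con{j}(e^{Ah},b)$ and $\Obs{j}(e^{Ah},c)$, together with \cref{prop:k_pos_mat_var} and \cref{lem:compound_mat} -- which is exactly your sufficiency argument. Your necessity direction (subsampling for the observability operator, narrow pulses plus lower semicontinuity of the sign variation for the controllability operator) merely spells out the approximation step the paper leaves implicit, so the two arguments coincide in substance and your extra detail is sound.
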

Using \cref{thm:Hankel_kpos_equiv}, CT internal Hankel $k$-positivity can
be verified by checking that the realizations
\[
    (\compound{e^{Ah}}{j}, \compound{\Con{j}(e^{Ah},b)}{j},\compound{\Obs{j}(e^{Ah},c)}{j})
\]
are internally positive for $0\le j \le k$.
However, it is undesirable to do this for all sufficiently small $h$. Next, we will discuss how to eliminate this variable from the above characterization. A classical result in that direction states that $e^{Ah} \geq 0$ for all $h \in (0,\varepsilon)$, $\varepsilon > 0$, (and in fact all $h \geq 0$) if and
only if $A$ is Metzler (i.e., $A$ has nonnegative off-diagonal entries) \cite{Farina2000,berman1979nonnegative}. In general,
the compound matrix of $e^{At}$ can be expressed in terms of the \emph{additive compound matrix} 
\cite{muldowney1990compound}
\[ 
    A^{[j]} := \log(\compound{\exp(A)}{j}) = \left . \frac{d}{dh}\compound{e^{Ah}}{j} \right|_{h=0} 
\]  
which satisfies 
\begin{equation}
    \compound{e^{Ah}}{j} = e^{A^{[j]}h}. \label{eq:add_comp_exp}
\end{equation}
In other words, $e^{Ah}$ is $k$-positive for all $h \geq 0$ if and only if $A^{[j]}$ is Metzler for $1\leq j\leq k$. The next result will also allow us to remove $h$ from the conditions involving $\Con{j}(e^{Ah},b)$ and
$\Obs{j}(e^{Ah},c)$.
\begin{thm}\label{thm:CT_equiv}
Let $(A,b,c)$ be a CT system such that $A^{[j]}$ is Metzler for $1\leq j\leq k$. Then, the following holds:
\begin{enumerate}[i.]
    \item $\compound{\Con{j}(A,b)}{j} \geq 0$ for $1\leq j \leq k$ if and only if there exists a sufficiently small $\varepsilon > 0$ such that $\compound{\Con{j}(e^{Ah},b)}{j} \geq 0$ for all $1 \leq j \leq k$ and all $h \in (0,\varepsilon)$.
    \item $\compound{\Obs{j}(A,c)}{j} \geq 0$ for $1\leq j \leq k$ if and only if there exists a sufficiently small $\varepsilon > 0$ such that $\compound{\Obs{j}(e^{Ah},c)}{j} \geq 0$ for all $1 \leq j \leq k$ and all $h \in (0,\varepsilon)$.
\end{enumerate}
\end{thm}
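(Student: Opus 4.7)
The proof hinges on a Taylor expansion of $\Con{j}(e^{Ah},b)$ in $h$. Substituting $e^{iAh} = \sum_{\ell \geq 0}(ih)^\ell A^\ell/\ell!$ into each column yields the factorisation
\[
\Con{j}(e^{Ah},b) = \Con{j}(A,b)\, M_j(h) + R_j(h),
\]
where $M_j(h) \in \mathbb{R}^{j \times j}$ is the scaled Vandermonde matrix with entries $[M_j(h)]_{\ell+1,i+1} = (ih)^\ell/\ell!$ and determinant $h^{j(j-1)/2}$, and $R_j(h)$ is the Taylor remainder whose columns are $O(h^j)$, with its first column identically zero. The Cauchy--Binet formula of \cref{lem:compound_mat} then produces the leading-order expansion
\[
\compound{\Con{j}(e^{Ah},b)}{j} = h^{j(j-1)/2}\,\compound{\Con{j}(A,b)}{j} + O(h^{j(j-1)/2+1}), \quad h \to 0^+.
\]
The backward direction $(\Leftarrow)$ follows immediately: dividing the assumed nonnegative left-hand side by $h^{j(j-1)/2}>0$ and passing to the limit $h \to 0^+$ transfers nonnegativity to $\compound{\Con{j}(A,b)}{j}$.

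For the forward direction $(\Rightarrow)$, entries where $\compound{\Con{j}(A,b)}{j}$ is strictly positive are handled by continuity and pose no difficulty for sufficiently small $h > 0$. The main obstacle, and the reason the Metzler hypothesis on $A^{[j]}$ enters, is entries where $[\compound{\Con{j}(A,b)}{j}]_I = 0$: for these, the sign of $[\compound{\Con{j}(e^{Ah},b)}{j}]_I$ at small $h$ is governed by higher-order Taylor coefficients, which need not be nonnegative \emph{a priori}. The plan is to apply the regularisation from \cref{prop:stp_aux_matrix}: under the full-column-rank assumption $\rk \Con{j}(A,b) = j$ (the rank-deficient case reducing to a trivial one, since $\Con{j}(e^{Ah},b)$ then has columns in the same Krylov subspace and its $j$-th compound vanishes identically in $h$), part~(iii) gives $\compound{F(\sigma)\Con{j}(A,b)}{j} > 0$ strictly for every $\sigma > 0$, and the leading-order Taylor argument applied to $F(\sigma)\Con{j}(e^{Ah},b)$ produces strict positivity of $\compound{F(\sigma)\Con{j}(e^{Ah},b)}{j}$ on some interval $h \in (0,\varepsilon(\sigma))$. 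Part~(iv) of \cref{prop:stp_aux_matrix} then returns the desired $\compound{\Con{j}(e^{Ah},b)}{j} \geq 0$.

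The hard step is securing a uniform $\varepsilon > 0$ independent of $\sigma$, which is precisely where the Metzler assumption on $A^{[j]}$ is indispensable: via the identity $\compound{e^{Ah}}{j} = e^{A^{[j]}h} \geq 0$ for $h \geq 0$, the nonnegative matrix-exponential structure provides uniform bounds on the Taylor remainders and, combined with a continuity argument in $\sigma$, produces a common interval of $h$ valid for all $\sigma$. Finally, taking the minimum over $j = 1, \dotsc, k$ gives a single $\varepsilon$. Part~(ii) follows by the dual argument, replacing $\Con{j}(A,b)$ with $\Obs{j}(A,c)$ throughout via \cref{eq:truncated_obs}.
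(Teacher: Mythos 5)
Your backward direction is sound in outline, and it is a genuinely different route from the paper's: the paper never Taylor-expands, but instead uses column operations to reduce exactly to $\Con{j}(I+hA,b)$, whose $j$-th compound is exactly $h^{j(j-1)/2}\compound{\Con{j}(A,b)}{j}$, and then runs an approximation/continuity argument together with the $F(\sigma)$-regularization of \cref{prop:stp_aux_matrix} as a chain of equivalences covering both directions at once. One rigor point in your version: the expansion $\compound{\Con{j}(e^{Ah},b)}{j}=h^{j(j-1)/2}\compound{\Con{j}(A,b)}{j}+O(h^{j(j-1)/2+1})$ does \emph{not} follow from Cauchy--Binet applied to $\Con{j}(A,b)M_j(h)+R_j(h)$ alone; the naive estimate on the remainder's contribution is only $O(h^{j})$, which is weaker than what you need as soon as $j\ge 3$. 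You must expand the minors multilinearly and count powers: a term using $m\ge 1$ remainder columns forces the $j-m$ truncated columns to pick distinct Krylov directions, so it carries at least $h^{(j-m)(j-m-1)/2+jm}=h^{j(j-1)/2+m(m+1)/2}$. With that (fixable) argument supplied, dividing by $h^{j(j-1)/2}$ and letting $h\to 0^+$ does give the backward implication.

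The genuine gap is the forward direction. You correctly observe that \cref{prop:stp_aux_matrix}~(iv) requires $\compound{(F(\sigma)\Con{j}(e^{Ah},b))}{j}\ge 0$ for \emph{all} $\sigma>0$ at a \emph{fixed} $h$, while your Taylor argument only delivers $h\in(0,\varepsilon(\sigma))$, with $\varepsilon(\sigma)$ controlled by the smallest entry of $\compound{(F(\sigma)\Con{j}(A,b))}{j}$. In the only nontrivial case --- when $\compound{\Con{j}(A,b)}{j}$ has zero entries --- that margin tends to $0$ as $\sigma\to\infty$, so $\varepsilon(\sigma)$ may collapse, and your proposed rescue does not repair this: the Taylor remainders of $e^{Ah}$ do not depend on $\sigma$ at all, so ``uniform bounds on the Taylor remainders'' via $\compound{e^{Ah}}{j}=e^{A^{[j]}h}\ge 0$ cannot restore uniformity in $\sigma$; the degeneration sits in the positivity margin of the regularized minors, and nowhere in your sketch is the sign structure of $A^{[j]}$ actually brought to bear on it. Note that the hypothesis is truly load-bearing here, so it cannot be waved in abstractly: for $b=(1,0,0)^\transp$ and $A$ with $Ab=(0,0,1)^\transp$, $A^2b=(0,-1,0)^\transp$, $A^3b=0$, one has $\compound{\Con{2}(A,b)}{2}=(0,1,0)^\transp\ge 0$, yet the $\{1,2\}$-minor of $\Con{2}(e^{Ah},b)$ equals $-h^2/2<0$ for all $h>0$; this $A$ is of course not Metzler. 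A complete proof therefore has to exhibit concretely where Metzlerness of $A^{[j]}$ (equivalently, $k$-positivity of $e^{Ah}$) enters the estimate for the vanishing minors; as written, your forward implication is asserted rather than proven.
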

\begin{proof}
We only show the first item, as the second follows analogously. Let us begin by showing that we can assume $\rk(\Con{j}(A,b)) = \rk(\Con{j}(e^{Ah},b))$. To see this, note that if $\rk(\Con{j}(A,b)) < j$, then all $A^{i}b \in \im{\Con{j-1}(A,b)}$ for all $i \geq j-1$, where $\im{\cdot}$ denotes the \emph{image (range)} of a matrix. In particular, $e^{Ah}b = \sum_{i=0}^{\infty} \frac{(Ah)^i}{i!}b \in \im{\Con{j-1}(A,b)}$ for all $h >0$ and thus, $\rk(\Con{j}(e^{Ah},b)) < j$. Conversely, if $\rk(\Con{j}(e^{Ah},b)) < j$, then by suitable column additions we also have $$\rk(\Con{j}(I-e^{Ah},b)\diag(1,h,\dots,h^{j-1})^{-1}) = \rk(\Con{j}(h^{-1}(I-e^{Ah}),b)) < j,$$ which, due to the lower semi-continuity of the rank \cite{hiriart2013variational}, proves in the limit $h \to 0$ that $\rk(\Con{j}(A,b)) < j$. Hence, we can assume that $\rk(\Con{j}(A,b)) = \rk(\Con{j}(e^{Ah},b)) = j$, as otherwise $\compound{\Con{j}(A,b)}{j}=0$ and the claim holds trivially.

Next, let $\compound{\Con{j}(A,b)}{j} \geq 0$ for $1\leq j \leq k$. Then, $\compound{(F(\sigma)\Con{j}(A,b))}{j} > 0$
for all $\sigma > 0$ by \cref{prop:stp_aux_matrix} (iii). Suitable column additions within $\Con{j}(A,b)$ yield
that this is equivalent to $\compound{(F(\sigma)\Con{j}(I+hA,b))}{j} > 0$ for all $\sigma, \; h > 0$. 
Since $(I+hA)b$ can be approximated arbitrarily well by $e^{Ah}b$ for sufficiently small $h > 0$, 
the continuity of the determinant \cite{horn2012matrix} implies the equivalence to $\compound{(F(\sigma) 
\Con{j}(e^{Ah},b))}{j} > 0$ for all $\sigma> 0$ and all sufficiently small $h > 0$. Hence, our claim follows for
for sufficiently small $h>0$ by invoking \cref{prop:stp_aux_matrix} (iv).
\end{proof}

Since $e^{Ah}$ has only positive eigenvalues, it follows from \cref{thm:cont_k_pos,thm:CT_equiv} 
that \cref{thm:Hankel_kpos_equiv} remains true in CT.
\begin{thm}\label{thm:Hankel_kpos_equiv_CT}
    $(A,b,c)$ is CT internally Hankel $\tp{k}$ if and only if 
            \begin{equation}
                \label{eq:CT_compound_real}
                (A^{[j]}, \compound{\Con{j}(A,b)}{j},\compound{\Obs{j}(A,c)}{j}).  
            \end{equation}
        is CT internally positive for all $1 \leq j \leq k$
\end{thm}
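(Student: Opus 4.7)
The plan is to deduce the statement from its discrete-time counterpart via sampling and then strip the sample period $h$ out of the algebraic conditions. I would proceed in four steps.

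First, by \cref{lem:internal_kpos_CT}, $(A,b,c)$ is CT internally Hankel $\tp{k}$ if and only if there exists $\varepsilon>0$ such that the DT system $(e^{Ah},b,c)$ is internally Hankel $\tp{k}$ for every $h\in(0,\varepsilon)$. Applying \cref{thm:Hankel_kpos_equiv} to $(e^{Ah},b,c)$ then recasts this condition as simultaneous DT internal positivity of the $j$-th compound realizations
\[
\left(\compound{e^{Ah}}{j},\; \compound{\Con{j}(e^{Ah},b)}{j},\; \compound{\Obs{j}(e^{Ah},c)}{j}\right),\qquad 1\le j\le k,
\]
for all $h\in(0,\varepsilon)$. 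By \cref{prop:internal_pos}, this is equivalent to the entrywise nonnegativity of each of these three matrices for all such $h$.

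Second, I would decouple the three nonnegativity requirements and remove $h$ from each. Using the identity $\compound{e^{Ah}}{j} = e^{A^{[j]}h}$ together with the standard Metzler characterization of nonnegative matrix exponentials, $\compound{e^{Ah}}{j}\ge 0$ holds for every sufficiently small $h>0$ if and only if $A^{[j]}$ is Metzler (in which case the inequality holds for every $h\ge 0$). Once this Metzler condition has been secured, \cref{thm:CT_equiv} is directly applicable and yields the equivalence of $\compound{\Con{j}(e^{Ah},b)}{j}\ge 0$ and $\compound{\Obs{j}(e^{Ah},c)}{j}\ge 0$ on some interval of small $h>0$ with the $h$-free conditions $\compound{\Con{j}(A,b)}{j}\ge 0$ and $\compound{\Obs{j}(A,c)}{j}\ge 0$ for $1\le j\le k$.

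Finally, I would package these three $h$-free conditions as the defining conditions of CT internal positivity of the realization $(A^{[j]},\compound{\Con{j}(A,b)}{j},\compound{\Obs{j}(A,c)}{j})$, producing the claimed equivalence. The main obstacle is precisely the removal of $h$ in the controllability and observability conditions: a naive limit $h\to 0$ would only yield trivial statements when the compound matrices lie on the boundary of the nonnegative cone. This is where \cref{thm:CT_equiv} (and, underneath it, the strictly totally positive smoothing matrix $F(\sigma)$ of \cref{prop:stp_aux_matrix}) is indispensable, as it lifts the boundary case to a strictly positive regime where continuity of the determinant can be used before taking a limit back.
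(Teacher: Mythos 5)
Your proposal is correct and follows essentially the same route as the paper: reduce to the sampled system via \cref{lem:internal_kpos_CT}, apply the DT characterization \cref{thm:Hankel_kpos_equiv}, and then eliminate the sample period $h$ using the additive-compound identity $\compound{e^{Ah}}{j}=e^{A^{[j]}h}$ with the Metzler criterion together with \cref{thm:CT_equiv}. The only cosmetic difference is that the paper additionally name-checks \cref{thm:cont_k_pos} (to account for the pole/rank conditions behind \cref{thm:Hankel_kpos_equiv} for $e^{Ah}$), which your argument implicitly absorbs by invoking \cref{thm:Hankel_kpos_equiv} as stated.
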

Note that our defined compound system realizations are indeed the CT compound systems, whose external positivity can be used to verify CT (external) Hankel $k$-positivity. 
Finally, an analogue to \cref{thm:real_simple} can be obtained by substituting \cref{eq:A_cond} with the condition stated in the following lemma, extending the corresponding result in \cite{ohta1984reachability}.
\begin{lem}
Let $A \in \Rnn$ and $P \in \mathds{R}^{n \times K}$. Then, $e^{At} P = Pe^{Nt}$ with $e^{Nt}$ $k$-positive for all $t\geq 0$ if and only if there exits a $\lambda > 0$ such that $(A+\lambda I_n) P = P (N+\lambda I_K)$ with $(N+\lambda I_K)^{[j]} \geq 0$ for all $1\leq j\leq k$. 
\end{lem}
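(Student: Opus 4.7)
The plan is to show that both sides of the iff are equivalent to the same pair of statements: (a) $AP = PN$, and (b) the additive compound $N^{[j]}$ is Metzler for $1 \le j \le k$. Once this reduction is made, the lemma follows immediately.

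First I would handle the matrix commutation. The identity $e^{At}P = Pe^{Nt}$ for all $t \ge 0$ is equivalent to $AP = PN$: differentiating at $t = 0$ gives the necessity, while expanding $e^{At} = \sum_i A^it^i/i!$ and using $A^i P = PN^i$ (by induction from $AP = PN$) gives the sufficiency. On the other side of the iff, the algebraic equality $(A+\lambda I_n)P = P(N+\lambda I_K)$ is equivalent to $AP = PN$ for \emph{any} $\lambda$, since $\lambda I_n P = P\lambda I_K = \lambda P$. So both intertwining relations reduce to the same condition (a).

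Next, I would reduce both positivity statements to condition (b). For the left-hand side, $e^{Nt}$ is $k$-positive for all $t\ge 0$ iff $\compound{e^{Nt}}{j} \geq 0$ for $t \geq 0$ and $1 \leq j \leq k$. By \cref{eq:add_comp_exp} this equals $e^{N^{[j]}t}$, and by the classical Metzler characterization of CT positive linear systems (cited in the paper right before \cref{eq:add_comp_exp}), $e^{N^{[j]}t}\geq 0$ for all $t \ge 0$ iff $N^{[j]}$ is Metzler. The core step is the analogous reduction for the right-hand side, which hinges on the key identity
\begin{equation*}
    (N+\lambda I_K)^{[j]} = N^{[j]} + j\lambda I_{\binom{K}{j}}.
\end{equation*}
To see this, note that $\lambda I_K$ commutes with $N$, so $e^{(N+\lambda I_K)t} = e^{\lambda t}e^{Nt}$. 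Taking $j$-th compounds and using \cref{lem:compound_mat}(i) together with $\compound{(e^{\lambda t}I_K)}{j} = e^{j\lambda t} I_{\binom{K}{j}}$ gives $\compound{e^{(N+\lambda I_K)t}}{j} = e^{j\lambda t} e^{N^{[j]}t} = e^{(N^{[j]}+j\lambda I)t}$. Comparing with $\compound{e^{(N+\lambda I_K)t}}{j} = e^{(N+\lambda I_K)^{[j]}t}$ and differentiating at $t=0$ yields the identity.

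With this identity in hand, the right-hand condition $(N+\lambda I_K)^{[j]} \geq 0$ (entrywise) becomes $N^{[j]} + j\lambda I \geq 0$, which is equivalent to saying that all off-diagonal entries of $N^{[j]}$ are nonnegative (i.e., $N^{[j]}$ is Metzler) \emph{and} its diagonal entries exceed $-j\lambda$. The existence of a single $\lambda > 0$ that works for all $j=1,\dots,k$ is therefore equivalent to $N^{[j]}$ being Metzler for $1 \le j \le k$ (take $\lambda$ larger than $\max_{j,i} |N^{[j]}_{ii}|/j$). This matches condition (b) from the left-hand side, completing the equivalence. The main obstacle is the compound-matrix computation above; everything else is routine manipulation of matrix exponentials and the Metzler characterization of nonnegative semigroups.
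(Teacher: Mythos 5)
Your proof is correct and follows essentially the same route as the paper: both reduce the two sides to the common conditions $AP=PN$ and $N^{[j]}$ Metzler for $1\leq j\leq k$, using $\compound{e^{Nt}}{j}=e^{N^{[j]}t}$, the Metzler characterization of nonnegative semigroups, and a sufficiently large $\lambda$. The only cosmetic difference is that you derive the shift identity $(N+\lambda I_K)^{[j]}=N^{[j]}+j\lambda I$ from exponentials, whereas the paper invokes the additivity property $(X+Y)^{[j]}=X^{[j]}+Y^{[j]}$ of the additive compound and passes from the algebraic to the exponential intertwining via a limit of powers rather than the power series.
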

\begin{proof}
We begin by remarking the following properties of the additive compound matrix \cite{muldowney1990compound}: let $X,Y \in \Rnn$ and $Z \in \mathds{R}^{n \times K}$
\begin{enumerate}
    \item $(X+Y)^{[j]} = X^{[j]} + Y^{[j]}$
    \item $\compound{(e^{Xt}Z)}{j} = e^{X^{[j]}t} \compound{Z}{j}$
\end{enumerate}

$\Leftarrow$: Let $(A+\lambda I_n) P = P (N+\lambda I_K)$ for some $\lambda > 0$ such that $(N+\lambda I_K)^{[j]} \geq 0$ for all $1\leq j\leq k$. Then, for all $\frac{i}{t} \geq \lambda$ with $t \geq 0$ and $i \in \mathds{N}$, it holds that $(A+\frac{i}{t}  I_n) P = P (N+\frac{i}{t}  I_K)$ and consequently
\begin{align*}
  e^{At} P = \lim_{i \to \infty}
  \left( I_n+\frac{At}{i}\right)^i P = P \lim_{i \to \infty}   \left( I_K+\frac{Nt}{i}\right)^i= P e^{Nt}.
\end{align*}
Moreover, by the first propety above $N^{[j]}$ is Metzler, which by \cref{eq:add_comp_exp} implies that $e^{Nt}$ is $k$-positive for $t \geq 0$.

$\Rightarrow$: Let $e^{At} P = Pe^{Nt}$ with $k$-positive $e^{Nt}$ for all $t \geq 0$. Then, by definition of the additive compound matrix and the properties above, it holds that
\begin{align*}
   (A+\lambda I_n)^{[j]}\compound{P}{j} &=  (A^{[j]}+\lambda I_n^{[j]}) \compound{P}{j} \\ 
   &=\compound{P}{j} (N^{[j]} + \lambda I_K^{[j]}) = \compound{P}{j} (N+ \lambda I_K)^{[j]}
\end{align*}
for all $\lambda \geq 0$,  $1\leq j \leq k$, where $N^{[j]}$ is Metzler by \cref{eq:add_comp_exp}. Thus, by choosing $\lambda$ sufficiently large, we conclude that $(N+ \lambda I_K)^{[j]}$ is nonnegative for all $1\leq j \leq k$. 
\end{proof}

\subsection{Impulse and step response analysis}
Next, we want to apply our results to the analysis of over- and undershooting in a
step response, a classical problem in control (see e.g. \cite{aastrom2010feedback}). For LTI systems, the total number of over- and undershoots equals the number of sign changes in the impulse response. While several lower bounds for these sign changes have been derived \cite{damm2014zero,swaroop1996some,elkhoury1993influence,elkhoury1993discrete}, fewer results seem to exist on upper bounds \cite{elkhoury1993discrete,elkhoury1993influence}.  

In our new framework, we observe that the impulse response of $(A,b,c)$ fulfils $g(t) = (\Obs{}(A,c)b)(t)$. Therefore, if $\Obs{}(A,c)$ is $\ovd{k-1}$, then the impulse response of $(A,b,c)$ changes its sign at 
most $\vari{b}$ times for all $\vari{b} \leq k-1$, and has the same sign-changing order as $b$ in case of an equal number of sign-changes. Similarly to \cref{thm:real_simple}, we conclude the following result.
\begin{thm} \label{cor:impulse}
Let $(A,b,c)$ be an observable realization of $G(z)$. Then, there exists a realization $(A_+,b_+,c_+)$ of $G(z)$ such that $A_+$ and $\Obs{}(A_+,c_+)$ are $\ovd{k-1}$ if and only if there exists a $P \in \mathds{R}^{n \times K}$, $K \geq n$ such that $AP = PA_+$ and $\compound{\Obs{j}(A,c)}{j}^\transp \in \cone(\compound{P}{j})^\ast$, $1 \leq j 
\leq k$. 
\end{thm}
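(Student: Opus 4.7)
The plan is to mirror the argument for \Cref{thm:real_simple}, but dropping every ingredient attached to the input $b$; this is exactly what the statement asks, since only $A_+$ and $\Obs{}(A_+,c_+)$ are constrained and $b_+$ only needs to make $(A_+,b_+,c_+)$ realize $G(z)$. The relaxation from a square, invertible $P$ to a rectangular $P \in \mathds{R}^{n \times K}$ reflects that the constructed realization need not be minimal.

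For the forward direction, I would start with a realization $(A_+,b_+,c_+)$ of $G(z)$ such that $A_+$ and $\Obs{}(A_+,c_+)$ are both $\ovd{k-1}$. Using that $(A,b,c)$ is observable, so $\Obs{K}(A,c)$ has full column rank $n$, I would build $P \in \mathds{R}^{n \times K}$ from the output-equivalence relation $\Obs{K}(A,c)\,P = \Obs{K}(A_+,c_+)$; reading this row by row yields $cP = c_+$ and, via Cayley--Hamilton for $A$, $AP = PA_+$. Consequently $\Obs{j}(A,c)\,P = \Obs{j}(A_+,c_+)$ for every $j$, and taking $j$-th compounds via \Cref{lem:compound_mat} gives
\[
    \compound{\Obs{j}(A,c)}{j}\,\compound{P}{j} \;=\; \compound{\Obs{j}(A_+,c_+)}{j}.
\]
By \Cref{lem:var_dim_ss}, $\Obs{K}(A_+,c_+)$ is $k$-positive, so its $j$-minors on row-index set $\{1,\dotsc,j\}$---which are precisely the entries of $\compound{\Obs{j}(A_+,c_+)}{j}$---are nonnegative for $1 \le j \le k$. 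This nonnegativity is equivalent to the cone membership $\compound{\Obs{j}(A,c)}{j}^{\transp} \in \cone(\compound{P}{j})^{\ast}$.

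For the backward direction, given $P$ and $A_+$ with the stated properties, I would define $c_+ := cP$, so that $\Obs{j}(A_+,c_+) = \Obs{j}(A,c)\,P$ and the cone condition rewrites as $\compound{\Obs{j}(A_+,c_+)}{j} \ge 0$ for $1 \le j \le k$. Since $A_+$ is $\ovd{k-1}$, i.e. $k$-positive by \Cref{prop:k_pos_mat_var}, applying the observability half of \Cref{thm:cont_k_pos} yields that $\Obs{t}(A_+,c_+)$ is $k$-positive for every $t \ge k$, which by \Cref{lem:var_dim_ss} is equivalent to $\Obs{}(A_+,c_+)$ being $\ovd{k-1}$. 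It remains to choose $b_+$ so that $(A_+,b_+,c_+)$ realizes $G(z)$; any solution of $Pb_+ = b$ works, since then $c_+ A_+^i b_+ = cA^i P b_+ = cA^i b$.

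The main obstacle I anticipate is a careful treatment of non-reachable realizations. In the forward direction, the existence of $P$ satisfying $\Obs{K}(A,c)\,P = \Obs{K}(A_+,c_+)$ demands that the column space of $\Obs{K}(A_+,c_+)$ lie inside that of $\Obs{K}(A,c)$, which in general requires first passing to an output-equivalent subsystem of $(A_+,b_+,c_+)$---for instance its reachable part---while preserving the $\ovd{k-1}$ property. A mirror issue in the backward direction is ensuring $b \in \im{P}$ so that $b_+$ can be chosen; as noted in \Cref{rem:thm_realization}, this is exactly the full-row-rank condition on $P$ used there to extend \Cref{thm:real_simple} beyond the minimal case, and the same device applies here.
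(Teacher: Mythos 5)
Your proposal follows essentially the same route as the paper's proof. For the direction from the realization $(A_+,b_+,c_+)$ to $P$, the paper obtains $P$ as the top block of the Kalman observability decomposition of $(A_+,c_+)$, which delivers $AP=PA_+$ and $c_+=cP$ in one stroke; your construction of $P$ by solving $\Obs{}(A,c)P=\Obs{}(A_+,c_+)$ using the full column rank of $\Obs{}(A,c)$ produces the same intertwiner, and your passage to the cone condition via $\compound{\Obs{j}(A,c)}{j}\compound{P}{j}=\compound{\Obs{j}(A_+,c_+)}{j}$ together with \cref{lem:var_dim_ss} is exactly the paper's step. For the converse, the paper is terser (it only points to \cref{lem:var_dim_ss} and the proof of \cref{thm:real_simple}); your explicit steps---setting $c_+=cP$, invoking the observability half of \cref{thm:cont_k_pos}, and recovering $b_+$ from $Pb_+=b$---spell out what the paper leaves implicit, at the same level of rigor (neither you nor the paper verifies the rank hypotheses of \cref{thm:cont_k_pos} or the full-row-rank requirement on $P$ needed for $b\in\im{P}$, cf.\ \cref{rem:thm_realization}).

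The one place you go beyond the paper deserves a caution: the proposed remedy for the case where the columns of $\Obs{}(A_+,c_+)$ do not lie in the range of $\Obs{}(A,c)$---``pass to the reachable part while preserving the $\ovd{k-1}$ property''---is not justified as stated. Restricting $(A_+,b_+,c_+)$ to its reachable subspace requires choosing a basis for that subspace, and $k$-positivity of $A_+$ and of the observability operator is coordinate dependent, so the restricted realization need not inherit the $\ovd{k-1}$ property. To be fair, the paper does not resolve this case either: writing the observability decomposition with $(A,c)$ as the observable block silently presumes exactly the range containment you flag, so your attempt matches the paper's argument in substance while being more explicit about where the unminimal case is being assumed away.
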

\begin{proof}
$\Rightarrow$: Follows by \cref{lem:var_dim_ss} as in the proof to \cref{thm:real_simple}. 

$\Leftarrow$: Let $A$ and $\Obs{}(A_+,c_+)$ be $\ovd{k-1}$. Then by the Kalman observability decomposition, there exists a transformation $T = \begin{pmatrix} P \\ \ast  \end{pmatrix}$ such that
\begin{align*}
T A_+ = \begin{pmatrix} 
A & 0\\
\ast & \ast
\end{pmatrix} T, \quad c_+ = \begin{pmatrix} c & 0 \end{pmatrix} T.
\end{align*}
Consequently, $AP = PA_+$, $c_+ = cP$ and therefore, by \cref{lem:var_dim_ss},  $\compound{\Obs{j}(A,c)}{j} \compound{P}{j} = \compound{\Obs{j}(A_+,c_+)}{j}$.
\end{proof}

Since the realization $(A_+,b_+,c_+)$ may not be unique, it remains an open question how to minimize the
sign changes in $b_+$ in order to make the upper bound the least conservative. We leave an answer to this question for future work. It should be noted that the approach in \cite{elkhoury1993discrete} essentially corresponds to the case where a realization with a totally positive observability operator exits, because it assumes positive distinct real poles and real zeros, apart from multiple poles at zero. To simplify the treatment of multiple poles at zero in our framework, we remark the following corollary.
\begin{cor}\label{cor:poles_origin}
Let $(A,c)$ be such that $\compound{\Obs{j}(A,c)}{j} \geq 0$ for all $1\leq j \leq k$ and $\rk(\Obs{j}(A,c)) = j$ for all $1 \leq j\leq k-1$. Further, assume there exists an $\varepsilon > 0$ such that $A+\eta I$ is $k$-positive and $\rk(A+\eta I)$ is full for all $\eta \in (0,\varepsilon)$. Then, $\Obs{t}(A,c)$ is $\ovd{k-1}$.
\end{cor}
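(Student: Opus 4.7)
The plan is to deduce this corollary from \cref{thm:cont_k_pos}(ii) by applying that theorem to the shifted pair $(A+\eta I, c)$ for each $\eta \in (0,\varepsilon)$ and then passing to the limit $\eta \to 0^+$ using continuity of the minors. The shift is the natural way to handle the possibility of a defective zero eigenvalue of $A$ (multiple poles at the origin), which is what prevents a direct application of \cref{thm:cont_k_pos}.

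The main technical step is to verify that the three hypotheses of \cref{thm:cont_k_pos}(ii) hold for $(A+\eta I, c)$ whenever $\eta$ lies in $(0,\varepsilon)$. The $k$-positivity of $A+\eta I$ is given by assumption. For the nonnegativity of $\compound{\Obs{j}(A+\eta I,c)}{j}$, I would expand each row $c(A+\eta I)^{i-1}$ by the binomial theorem to obtain the factorization $\Obs{j}(A+\eta I,c) = T_\eta \, \Obs{j}(A,c)$, where $T_\eta \in \mathbb{R}^{j\times j}$ is the lower-triangular Pascal-type matrix with entries $(T_\eta)_{il} = \binom{i-1}{l-1}\eta^{i-l}$ for $l \le i$. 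Since $T_\eta$ is lower triangular with unit diagonal, $\compound{T_\eta}{j} = \det(T_\eta) = 1$, and the Cauchy-Binet identity from \cref{lem:compound_mat} yields
\begin{equation*}
\compound{\Obs{j}(A+\eta I,c)}{j} = \compound{T_\eta}{j}\,\compound{\Obs{j}(A,c)}{j} = \compound{\Obs{j}(A,c)}{j} \geq 0,
\end{equation*}
for $1\leq j \leq k$. For the rank hypothesis, the full-rank assumption on $A+\eta I$ gives $\rk(\Obs{j}(A+\eta I,c)(A+\eta I)^{K-j}) = \rk(\Obs{j}(A+\eta I,c))$, and invertibility of $T_\eta$ gives $\rk(\Obs{j}(A+\eta I,c)) = \rk(\Obs{j}(A,c)) = j$ for $1 \leq j \leq k-1$, which matches the assumed rank conditions on $(A,c)$.

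With the hypotheses checked, \cref{thm:cont_k_pos}(ii) implies that $\Obs{t}(A+\eta I,c)$ is $k$-positive for every $t \geq k$ and every $\eta\in(0,\varepsilon)$. Since the entries of $\Obs{t}(A+\eta I,c)$ depend polynomially on $\eta$, each $j$-minor is a continuous function of $\eta$; letting $\eta \to 0^+$ and using continuity therefore gives $\compound{\Obs{t}(A,c)}{j} \geq 0$ for $1 \leq j \leq k$ and $t \geq k$, i.e.\ $\Obs{t}(A,c)$ is $k$-positive for all $t \geq k$. Invoking \cref{lem:var_dim_ss} then yields that $\Obs{}(A,c)$ is $\ovd{k-1}$, which is the claim.

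The only real obstacle I foresee is establishing the factorization $\Obs{j}(A+\eta I,c) = T_\eta\,\Obs{j}(A,c)$ cleanly and recognizing that $\det T_\eta = 1$ makes the compound transformation trivial; once that is in hand, everything else is continuity and a direct invocation of \cref{thm:cont_k_pos,lem:var_dim_ss}.
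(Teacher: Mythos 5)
Your proposal is correct and follows essentially the same route as the paper's proof: shift to $A+\eta I$, note that $\Obs{j}(A+\eta I,c)$ arises from $\Obs{j}(A,c)$ by unit-triangular row operations (your explicit Pascal factor $T_\eta$ is just the paper's ``row additions'' made concrete), use full rank of $A+\eta I$ to meet the rank hypothesis of \cref{thm:cont_k_pos}(ii), and pass to the limit $\eta\to 0^+$ by continuity of minors before invoking \cref{lem:var_dim_ss}. The only difference is cosmetic — you take the limit on the truncated matrices first and then apply \cref{lem:var_dim_ss}, while the paper does the reverse.
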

\begin{proof}
Since $\Obs{j}(A+\eta I,c)$ results from row additions in $\Obs{j}(A,c)$, it follows that 
$\compound{\Obs{j}(A,c)}{j} \geq 0$ if and only if $\compound{\Obs{j}(A+\eta I,c)}{j} \geq 0$, and 
$\rk(\Obs{j}(A,c)) = \rk(\Obs{j}(A+\eta I,c))$. Since, by assumption, $\rk(A+\eta I)$ is full, it 
suffices to check the rank condition on $\Obs{j}(A+\eta I,c)$ in \cref{thm:cont_k_pos} in order to conclude with \cref{lem:var_dim_ss} that $\Obs{}(A+\eta I,c)$ is $\ovd{k-1}$. By the continuity of the minors
\cite{horn2012matrix}, it follows then that also $\Obs{}(A,c)$ is $\ovd{k-1}$.
\end{proof}

\section{Examples}
\label{sec:example}

\subsection{Internal Hankel $k$-positivity}

Consider a system given by the realization
\[
    A_+ = 
    \begin{pmatrix}
    0.25  &  0.25  &  0.20 \\
    0.25  &  0.30  &  0.30 \\
    0.10  &  0.35  &  0.40
    \end{pmatrix},
    \,
    b_+ = c_+^\transp = 
    \begin{pmatrix}
    1 \\ 0.1 \\ 0
    \end{pmatrix}
\]
For this realization, we have
\[
    \Con{3}(A_+,b+) = 10^{-2}
    \begin{pmatrix}
   100  & 27.5  & 16.575 \\
   10 &  28 &  19.325 \\
    0  & 13.5 & 17.95
    \end{pmatrix},
\]
\[
    \Con{3}(A_+,b+)_{[2]} = 10^{-3}
    \begin{pmatrix} 
   252.5 &  176.675 &  6.73375 \\
   135 &  179.5 &  26.98625 \\
   13.5 &  17.95  & 24.17125 \\
    \end{pmatrix},   
\]
and $\Con{3}(A_+,b+)_{[3]} = 21.472625\cdot 
10^{-3}$. Furthermore, we have
\[
    A_{+[2]} = 10^{-2}
    \begin{pmatrix}
    1.25  &  2.5  &  1.5 \\
    6.25  &  8  &  3 \\
    5.75  &  7  &  1.5 
    \end{pmatrix}
\]
and $A_{+[3]} = \det A= -2.25 \cdot 10^{-3}$ 
(all numbers above are exact).
Several facts can be stated regarding this realization. Firstly, 
$\rk\,A = \rk\,\Con{3}(A_+,b_+) = 3$,
and thus the system is controllable. Furthermore, $A_+$
is $2$-positive, but not $3$-positive, while
$\Con{3}(A_+,b_+)$ is $3$-positive. It immediately
follows from \cref{thm:cont_k_pos} that the 
controllability operator $\Con{}(A_+,b_+)$ is
$2$-positive, which can readily be verified numerically.
Secondly, it can be verified (we omit the details) that
$\Obs{3}(A_+,b_+)$ is full-rank and $3$-positive; we
conclude from Theorems \ref{thm:Hankel_kpos_equiv} and 
\ref{thm:cont_k_pos} that the (minimal) realization
$(A_+,b_+,c_+)$ is internally Hankel $2$-positive, but
not $3$-positive (since $\compound{A_+}{3} =\det A < 0$).
The canonical controllable realization of $G(z)$ reads
\begin{equation}
   A = 
    \begin{pmatrix}
    0 & 1 & 0 \\
    0 & 0 & 1 \\
    -0.00225 & -0.1075 & 0.95
    \end{pmatrix},
    \,
    b = 
    \begin{pmatrix}
    0 \\ 0 \\ 1
    \end{pmatrix},
    \,
    c = 
    \begin{pmatrix}
    0.0058 \\ -0.6565 \\ 1.01
    \end{pmatrix}^\transp \label{eq:canon_ex1}
\end{equation}

which is not internally Hankel 
$k$-positive for any $k\ge 1$. For the two realizations above, the $P$ matrix from
\cref{thm:real_simple} is simply the canonical 
controllability state-transformation matrix, given by
\[ P = \Con{3}(A_+,b_+)\Con{3}(A,b)^{-1}. \]

To illustrate the variation-diminishing 
property, we show in Figures \ref{fig:impulse_2pos}-\ref{fig:impulse_contr} the time evolution of
$y_+(t) = (\Obs{}(A_+,b_+)x_0)(t)$ and 
$y(t)=(\Obs{}(A,b)x_0)(t)$ for
the initial condition $x_0 = (-40.5,0.9,0.015)^\transp$.
It can be seen that given $\vari{x_0}=1$, the internally
Hankel $2$-positive realization yields $\vari{y_+}=0$,
and the sign variation in $x_0$ is diminished; the
controllability canonical realization yields 
$\vari{y}=3$, and the variation in $x_0$ is increased.

\begin{figure}
    \centering
    \begin{tikzpicture}
		\begin{axis}[height=4.5cm,
			width=12cm,
			axis y line = left,
			axis x line = center,
			ylabel={$y_+(t)$}, 
			xlabel={$t$}, 
			xlabel style={right},
			xticklabel style={above,yshift=0.5ex},
 			xmax=15.9,
			]			
			\addplot+[ycomb,black,thick]
				table[x index=0,y index=1] 
				{./data/impulse_2pos.txt};	
				\end{axis}
    \end{tikzpicture}
    \caption{The output of the internally Hankel $2$-positive
    realization, $y_+(t) = (\Obs{}(A_+,b_+)x_0)(t)$, has a smaller variation than
    $x_0 = (-40.5,0.9,0.015)^\transp$
    \label{fig:impulse_2pos}.}
\end{figure}
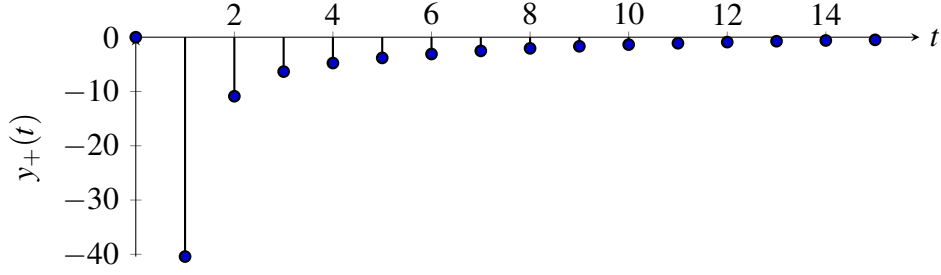

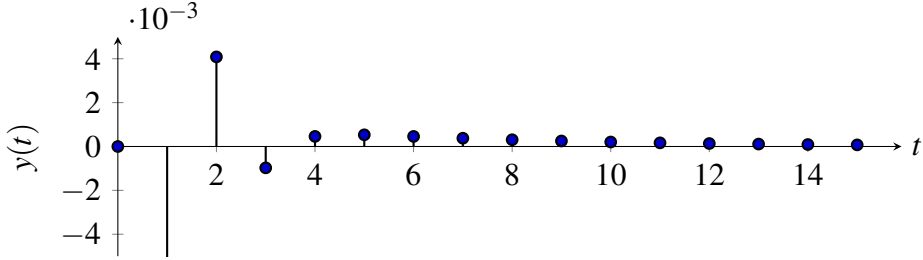
\begin{figure}
    \centering
    \begin{tikzpicture}
		\begin{axis}[height=4.5cm,
			width=12cm,
			axis y line = left,
			axis x line = center,
			ylabel={$y(t)$}, 
			xlabel={$t$}, 
			xlabel style={right},
 			xmax=15.9,
			ymax=0.005,
			ymin=-0.005,
			name = main,
			]			
			\addplot+[ycomb,black,thick]
				table[x index=0,y index=1] 
				{./data/impulse_contr.txt};	
		\end{axis}
    \end{tikzpicture}
    \caption{The output of the canonical controllable form
    realization \cref{eq:canon_ex1}, $y(t)=(\Obs{}(A,b)x_0)(t)$, has a larger variation than $x_0 = (-40.5,0.9,0.015)^\transp$.} %
    \label{fig:impulse_contr}
\end{figure}

\subsection{Impulse response analysis}

Consider the following system, previously shown as an example in
\cite{elkhoury1993discrete}:
\[
    G(z) = \frac{(z-0.22)(z-0.6)}{z^3(z-0.7)}
\]
The transfer function $G(z)$ has a realization given by
\begin{equation} \label{eq:real_impulse_example}
    A = 
    \begin{pmatrix}
        0.7 & 1 & 0 & 0 \\
        0 & 0 & 1 & 0 \\
        0 & 0 & 0 & 1 \\
        0 & 0 & 0 & 0 
    \end{pmatrix}
    \,
    b = 
    \begin{pmatrix}
    0 \\ 1 \\ -0.82 \\ 0.132 
    \end{pmatrix}
    \,
    c = 
    \begin{pmatrix}
    1 \\ 0 \\ 0 \\ 0 
    \end{pmatrix}^\transp 
\end{equation}

It can be verified that this realization has totally
positive $A$ and $\Obs{4}(A,c)$.
Furthermore, since $A$ is upper triangular with band-width $1$, $A + \eta I$ is totally
positive and $\rk(A+
\eta I)$ is full for any $\eta > 0$. Thus, by
\cref{cor:poles_origin}, the number of sign
changes in the impulse response of $G(z)$ (and, hence,
the number of extrema in its step response) is upper
bounded by $\vari{b}=2$; this same upper bound was
previously obtained by 
\cite{elkhoury1993discrete}. \cref{fig:impulse_ex2} shows that the
this bound is tight. 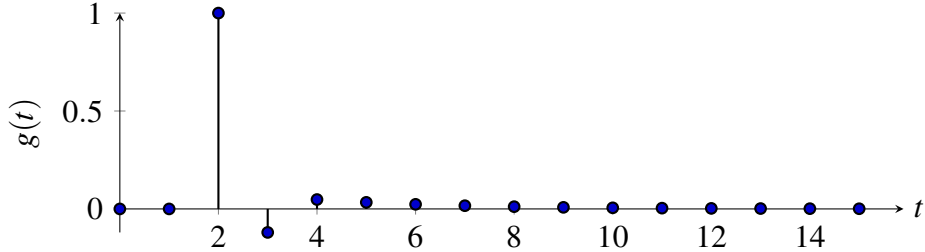
\begin{figure}
    \centering
    \begin{tikzpicture}
		\begin{axis}[height=4.5cm,
			width=12cm,
			axis y line = left,
			axis x line = center,
			ylabel={$g(t)$}, 
			xlabel={$t$}, 
			xlabel style={right},
 			xmax=15.9,
			]			
			\addplot+[ycomb,black,thick]
				table[x index=0,y index=1] 
				{./data/impulse_ex2.txt};	
				\end{axis}
    \end{tikzpicture}
    \caption{Impulse response of the system in \cref{eq:real_impulse_example}
    \label{fig:impulse_ex2} has two zero crossings, which coincides with our derived upper bound.}
\end{figure}
However, in contrast to \cite{elkhoury1993discrete}, our framework does not assume real poles or zeros. In particular, the modified transfer function
 $$G_m(z) = \frac{(z-0.5 + i)(z-0.5-i)}{z^3(z-0.7)},$$
can be realized with the same $A$ and $c$ as in \cref{eq:real_impulse_example} and with $b = \begin{pmatrix}
       0 & 1 & -1 & 1.25
\end{pmatrix}^\transp$, which again provides a tight upper bound on the variation of the impulse response. 

Finally, note that by \cref{prop:repeated}, there cannot be any $b$ such that $\Con{}(A,b)$ is $\tp{2}$, because otherwise $(A,b,c)$ would be Hankel $\tp{2}$. This illustrates the importance of the irreducibility condition in \cref{prop:irred_A_b}.

	\section{Conclusion}
Under the assumption of $k$-positive autonomous dynamics, this work has derived tractable conditions for which the controllability and observability operators are $k$-positive. These results have been used in two ways. 

First, we introduced and studied the notion of internally Hankel $k$-positive systems, i.e., systems which are variation diminishing from past inputs with at most $k-1$ variations to future states to future outputs. It has been shown that these properties are tractable  through internal positivity of the associated compound systems. In particular, internal Hankel $k$-positivity provides a means of studying external Hankel $k$-positivity with finite-dimensional tools. As a result, this systems class combines and extends two important system classes: (i) the celebrated class of internally positive systems ($k=1$) \cite{Farina2000} and (ii) the class of relaxation systems \cite{willems1976realization} ($k=\infty$); the latter has also been shown to admit minimal internally Hankel totally positive realizations. Moreover, our results lay the groundwork for future work linking autonomous variation diminishing systems, as considered in \cite{margaliot2018revisiting,alseidi2021discrete}, with the theory of externally variation diminishing systems \cite{grussler2020variation}. Finally, as a generalization of the case $k=1$ found in \cite{ohta1984reachability}, a characterization of when an externally Hankel $\tp{k}$ system possesses a minimal internally Hankel $\tp{k}$ realization has been discussed. In future work, the characterizations for non-minimal realizations and realization algorithms shall be addressed. Noticeably, we have not introduced an internal notion for externally Toeplitz $k$-positive systems: this is a consequence of the non-separability of the Toeplitz operator. Thus, contrary to the standard definition of internal positivity, this suggests that the Hankel operator is a more natural object with which to associate internal positivity.

Second, we have developed a new framework for upper-bounding the number of sign changes in the impulse response of an LTI system. In particular, while the results of  \cite{elkhoury1993discrete} are recovered in the case $k=\infty$, our framework allows considering generic $k$. In future work, we plan to address the conservatism of our analysis, its numerical numerical tractability, and the theoretical implication of the location of zeros. Further, we believe that a non-linear extension of our framework will be of timely importance. For instance, the cumulative difference between a step response and the output, called the (static) regret, is a common tractable measure in online learning \cite{orabona2019modern} and adaptive control problems \cite{karlsson2020feedback}. However, its meaningfulness depends on a small variability such as a small variance or a bounded variation.

\printbibliography

\end{document}